\documentclass[10pt]{article}

\usepackage{amssymb}
\usepackage{amscd,enumerate,amsfonts,calc,amsmath,verbatim,xypic}
\usepackage[all]{xy}

\textwidth 5.4in
\hoffset=-0.54in
\usepackage{amsthm}

\providecommand{\vone}{\vskip 1ex}
\providecommand{\vtwo}{\vskip 2ex}
\providecommand{\hhalf}{\mbox{\hspace{0.5em}}}

\theoremstyle{plain}
\newtheorem{theorem}{Theorem}[section]
\newtheorem{corollary}[theorem]{Corollary}
\newtheorem{proposition}[theorem]{Proposition}
\newtheorem{question}[theorem]{Question}
\newtheorem{lemma}[theorem]{Lemma}
\newtheorem{remark}[theorem]{{Remark}}
\newtheorem{setup}[theorem]{Setup}
\newtheorem{definition}[theorem]{Definition}
\newtheorem{example}[theorem]{Example}

\providecommand{\rar}{\rightarrow}
\providecommand{\lrar}{\longrightarrow}
\providecommand{\into}{\hookrightarrow}
\providecommand{\onto}{\lrar\!\!\!\!\rar}
\providecommand{\inc}{\subseteq}

\newcommand{\ses}[5]{\ensuremath
{0 \rar {#1} \overset{{#4}}\rar {#2} \overset{{#5}}\rar {#3} \rar 0 }}

\renewcommand{\vec}[3]{\ensuremath{#1_#2, \ldots, #1_#3 }}

\providecommand\adj{\text{\it adj}}

\renewcommand\dim{\text{\rm dim}}
\providecommand\Hom{\text{\rm Hom}}

\providecommand\Ker{\text{\rm ker}}
\providecommand\ker{\text{\rm Ker}}
\providecommand\min{\text{\rm min}}
\providecommand\soc{\text{\rm soc}}

\providecommand{\Q}{{\mathbb Q}}
\renewcommand\-{{\_\!\_}}
\providecommand{\sk}{{\ensuremath{\sf k }}}
\providecommand{\ma}{{\mathfrak a }}
\providecommand{\mb}{{\mathfrak b }}
\providecommand{\mc}{{\mathfrak c }}
\providecommand{\md}{{\mathfrak d }}

\providecommand{\m}{{\mathfrak m }}

\begin{document}

\title{\large The Gorenstein Colength of an Artinian Local Ring}
\author{H. Ananthnarayan}
\date{\today}
\maketitle

\abstract{
\noindent
In this paper, we make the notion of approximating an Artinian local ring by a Gorenstein Artin local ring precise using the concept of Gorenstein colength. We also answer the question of when the Gorenstein colength is at most two.\\
Keywords: Gorenstein colength; self-dual ideals; Teter's condition.
}

\section{\large Introduction}

Let $T$ be a commutative Noetherian ring and $\mb$ an ideal in $T$ such that $R := T/\mb$ is Cohen-Macaulay. A problem of interest to many mathematicians is finding Gorenstein rings $S$ mapping onto the Cohen-Macaulay ring $R$. We are interested not only in finding such a Gorenstein ring, but also find one as ``close'' to $R$ as possible. More specifically, the question we would like to answer is the following:

Given an Artinian local ring $(R,\m,\sk)$, how ``close'' can one get to $R$ by a Gorenstein Artin local ring? In order to make this notion precise, we introduce a number called the Gorenstein colength of $R$ in Definition \ref{D0}. We use the following notation throughout the paper.

\begin{setup} \label{S1} \hfill{}

\noindent
{\rm
1. Let $(R,\m,\sk)$ be an Artinian local ring and $\omega_R$ (or simply $\omega$) be the canonical module of $R$. Since $R$ is Artinian, $\omega$ is the same as the injective hull over $R$, of the residue field $\sk$. By $(\-)^*$ and $(\-)^\vee$, we mean $\Hom_R(\-,R)$ and $\Hom_R(\-,\omega)$ respectively.

\vskip 2pt
\noindent
2. By Cohen's Structure Theorem, we can write $R \simeq T/\mb$, where $(T,\m_T,\sk)$ is a regular local ring and $\mb$ is an $\m_T$-primary ideal. By $\ \bar{}\ $, we mean going modulo $\mb$.
}\end{setup}

\begin{definition}\label{D0}{\rm 
Let $(R,\m,\sk)$ be an Artinian local ring. Define the Gorenstein colength of $R$, denoted $g(R)$ as:\\
$g(R) = \min\{\lambda(S) - \lambda(R) : S$ is a Gorenstein Artin local ring 
mapping onto $R\},$\\ where $\lambda(\-)$ denotes length. 
}\end{definition}

The number $g(R)$ gives a numerical value to how close one can get to an Artinian local ring $R$ by a Gorenstein Artin local ring. We do not require the embedding dimension of $S$ to be the same as that of $R$.

It is clear that $g(R)$ is zero if and only if $R$ is Gorenstein. Observe that $g(R) = 1$ if and only if $R$ is not Gorenstein and $R \simeq S/\soc(S)$ for a Gorenstein Artin ring $S$. W. Teter gives a characterization for such rings in his paper \cite{T}. In their paper \cite{HV}, C. Huneke and A. Vraciu refer to these rings as Teter's rings.

\noindent
With notation as in Setup \ref{S1}.1, Teter's theorem states:

\begin{theorem}[Teter]\label{Teter}
Let $(R,\m,\sk)$ be an Artinian ring. Then the following are equivalent:\\
{\rm i)} $g(R) \leq 1$.\\
{\rm ii)} Either $R$ is Gorenstein or there is an isomorphism $\m \overset{\phi}\lrar \m^\vee$ such that $\phi(x)(y) = \phi(y)(x)$, for every $x,y$ in $\m$. 
\end{theorem}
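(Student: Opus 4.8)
I would prove Teter's theorem by passing back and forth between a Gorenstein ring $S$ and the ideal-theoretic data it determines on $R$. The key observation is that if $S$ is a Gorenstein Artin local ring with $\lambda(S) = \lambda(R)+1$ and $S \onto R$ with kernel $I$, then $\lambda(I) = 1$, so $I \inc \soc(S)$; since $S$ is Gorenstein its socle is one-dimensional, hence $I = \soc(S)$ and $R \simeq S/\soc(S)$. Conversely if $R \simeq S/\soc(S)$ then $\lambda(S)-\lambda(R) = 1$. So the content of (i) $\Rightarrow$ (ii) is: starting from $R = S/\soc(S)$ with $R$ not Gorenstein, manufacture the self-dual isomorphism $\phi\colon \m \lrar \m^\vee$. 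For (ii) $\Rightarrow$ (i), starting from such a $\phi$ (and assuming $R$ not Gorenstein), build a Gorenstein $S$ with $S/\soc(S) \simeq R$.

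**The direction (i) $\Rightarrow$ (ii).** Assume $R \simeq S/\soc(S)$, $R$ not Gorenstein, and let $\m_S$ be the maximal ideal of $S$, $s$ a generator of $\soc(S)$. Since $R$ is not Gorenstein, $\soc(S) \inc \m_S^2$, so $\m_S$ and $\m$ have the same minimal generators and $\m_S/\soc(S) \simeq \m$. The plan is to use that $S$ is Gorenstein, so $\omega_S \simeq S$ and Matlis duality over $S$ is just $\Hom_S(-,S)$; and $\omega_R = (0:_S \soc(S))/\soc(S)$... more usefully, $\omega_R \simeq \Hom_S(R,S)$, which as an ideal of $S$ is $(0 :_S \soc(S))$. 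Now $R$ not Gorenstein forces $\soc(S) \inc \m_S^2$, and one checks $(0:_S \soc(S)) = \m_S$; indeed $\m_S \cdot \soc(S) = 0$, and anything annihilating $\soc(S)$ lies in $\m_S$ since $S$ is local Artinian with $\soc(S) \ne 0$. Hence $\omega_R \simeq \m_S$ as $S$-modules, and modding out by the $R$-action means $\m_S$ is naturally an $R$-module, giving $\omega_R \simeq \m_S$ as $R$-modules. I would then identify $\m_S \simeq \m \oplus \sk$ is false in general, but rather there is a short exact sequence $0 \to \soc(S) \to \m_S \to \m \to 0$; dualizing into $\omega_R$ over $R$ and comparing lengths should show $\m^\vee \simeq \m_S$ as well (using $\lambda(\m^\vee) = \lambda(\m)$ and $\lambda(\m_S) = \lambda(\m)+1$... so this needs care — more likely the right statement is $\m^\vee \simeq$ a submodule/quotient of $\m_S$, and the isomorphism $\phi$ comes from the multiplication pairing $\m_S \times \m_S \to \soc(S) \simeq \sk$ on $S$, which is symmetric and, restricted appropriately, descends to a perfect pairing $\m \times \m \to \sk$ exhibiting $\phi$). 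The symmetry $\phi(x)(y)=\phi(y)(x)$ is then immediate from commutativity of multiplication in $S$.

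**The direction (ii) $\Rightarrow$ (i).** Given $\phi\colon \m \xrightarrow{\sim} \m^\vee$ with $\phi(x)(y)=\phi(y)(x)$, the natural candidate is the Nagata idealization (trivial extension) $S = R \ltimes \omega_R$, which is always Gorenstein Artin with $\lambda(S) = \lambda(R) + \lambda(\omega_R) = 2\lambda(R)$ — too big in general. So instead one wants a quotient of $R \ltimes \omega_R$ of length exactly $\lambda(R)+1$. The symmetric isomorphism $\phi$ should let me define an $R$-submodule-with-ring-structure: consider the subring of $R \ltimes \omega_R$ generated by $R$ and the ``graph'' $\{(x, \phi(x)) : x \in \m\} \inc \m \ltimes \m^\vee \inc R \ltimes \omega_R$... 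Actually I would construct $S$ directly: let $S = R \times_{\sk} ?$ — a fiber product — or better, define a ring structure on $V := \sk \oplus \m$ won't work since that's just $R$. The cleanest route: the map $R \to R\ltimes\omega_R$, and one shows the symmetric $\phi$ yields a surjection $R \ltimes \omega_R \onto S$ onto a Gorenstein ring of the right length, OR one shows directly that the pair $(\m,\phi)$ defines a ring $S := \sk \oplus \m_S$ where $\m_S := \{(x,\phi(x))\}$ — no. I expect the actual construction is: $S$ has maximal ideal $\m_S$ fitting in $0 \to \sk \to \m_S \to \m \to 0$, with multiplication $\m_S \times \m_S \to \m_S$ extending that of $R$ and with the induced pairing $\m \times \m \to \sk$ given by $(x,y)\mapsto \phi(x)(y)$; associativity and well-definedness are exactly what symmetry of $\phi$ buys, and the ``$\sk$'' is the socle, so $S/\soc(S) \simeq R$ and $S$ is Gorenstein since $\soc(S) = \sk$ is one-dimensional (this uses that $\m^\vee \cong \m$ forces $\soc(S)$ to be simple — need to verify $\soc(S)$ is exactly $\sk$, not bigger). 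Then $\lambda(S) = \lambda(R)+1$ gives $g(R) \le 1$.

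**Main obstacle.** The delicate point is the bookkeeping identifying $\m^\vee$ with the relevant piece of $\m_S$ in the forward direction, and, in the converse, checking that the ring $S$ built from $(\m, \phi)$ is genuinely associative with one-dimensional socle — i.e. that symmetry of $\phi$ is not just necessary but sufficient for the multiplication $\m_S \times \m_S \to \m_S$ to be well-defined and associative, and that no part of $\m$ accidentally lands in the new socle. I would isolate this as the technical heart: verify that $(x,a)\cdot(y,b) := (xy,\, xb + ya + \text{(correction term from }\phi))$ defines a commutative associative local ring, using $\phi(x)(y)=\phi(y)(x)$ at the step where associativity is checked on triple products landing in the socle. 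Everything else — the length count, the equivalence with $R \simeq S/\soc(S)$, Gorensteinness from simple socle — is routine.
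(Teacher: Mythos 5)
Your reduction of condition (i) to ``$R \simeq S/\soc(S)$ for some Gorenstein Artin $S$'' is correct, and your plan for (i) $\Rightarrow$ (ii) --- identify $\omega_R$ with the ideal $(0:_S\soc(S)) = \m_S$ of $S$, then read off a symmetric pairing from the multiplication of $S$ --- is sound and in fact runs parallel to what the paper establishes in much greater generality in Proposition \ref{KO}, Corollary \ref{C0.25} and Lemma \ref{L1}: for \emph{any} Gorenstein $S \onto R$, the ideal $\ma = f(\omega)$ with $f = \psi|_\omega$ is self-dual and $f$ satisfies Teter's condition; Lemma \ref{L2} then forces $\lambda(R/\ma) \le 1$, hence $\ma = \m$ when $R$ is not Gorenstein. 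The place where you hesitate (``$\m^\vee \simeq \m_S$ --- needs care'') is fixable but your guess is off: $\lambda(\m_S) = \lambda(\m)+1 \neq \lambda(\m^\vee)$, so $\m^\vee \not\simeq \m_S$; rather one defines $\phi(x)(y) := \tilde x \tilde y \in \m_S \simeq \omega_R$ for lifts $\tilde x, \tilde y \in \m_S$ (well-defined because $\soc(S)\cdot\m_S = 0$), checks injectivity via faithfulness of $\omega_R$, and gets surjectivity from $\lambda(\m^\vee) = \lambda(\m)$.

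The direction (ii) $\Rightarrow$ (i) is where you have a genuine gap: the ring $S$ is never constructed. You describe what its maximal ideal $\m_S$ ought to look like ($0 \to \sk \to \m_S \to \m \to 0$ with a twisted multiplication) but stop short of saying what $S$ is, where its unit comes from, or how $R$ (as opposed to merely $\m$) acts on $\m_S$; and every candidate ambient object you float --- $R \ltimes \omega_R$, a subring generated by a ``graph'' of $\phi$ (which does not even live inside $R\ltimes\omega_R$, since $\phi(x) \in \m^\vee$ is not an element of $\omega_R$), or $\sk \oplus \m_S$ --- either has the wrong length or tacitly assumes $\sk \subset R$. That last assumption is the real obstruction: Teter's theorem carries no equal-characteristic hypothesis, so in mixed characteristic there is no subfield $\sk$ available to serve as a degree-zero piece or as an algebra retract (which is exactly why the paper's Theorem \ref{T:1}, whose construction $S = T\oplus\omega$ matches your intuition, is stated with a retract hypothesis). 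The paper's route around this, which you would need, is Theorem \ref{T1} via Corollary \ref{C2}: present $R = T/\mb$ with $T$ regular local and $\mb \inc \m_T^2$, convert $\phi$ through $\Hom$--$\otimes$ adjunction into a map $\widehat\phi$ on $\m_T\ma/\ma\mb$, and take $S = T/\mc$ where $\mc/\ma\mb = \Ker(\widehat\phi|_{\mb/\ma\mb})$; the content is then that $\mc$ cuts out a Gorenstein quotient of the correct length, which hinges on a careful description of $\Ker(\pi)$ using the regular sequence generating $\m_T$ and on Teter's symmetry condition precisely where you expected associativity to enter. Without some version of this, your sketch does not close.
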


{\rm The commutativity condition on the map $\phi$ in (ii) of Theorem \ref{Teter} is an awkward technical condition. The following theorem (\cite{HV}, Theorem 2.5), of Huneke and Vraciu is an improvement of Theorem \ref{Teter}, which gets rid of Teter's technical condition on the map $\phi$. However, they need to assume that 2 is invertible in $R$ and $\soc(R) \inc \m^2$. \vtwo}

\begin{theorem}[Huneke-Vraciu]\label{AC}
Let $(R,\m,\sk)$ be an Artinian ring such that $1/2 \in R$, $\soc(R) \inc \m^2$. 
Then the following are equivalent:\\
{\rm i)} $g(R) \leq 1$.\\
{\rm ii)} Either $R$ is Gorenstein or $\m \simeq \m^\vee$.
\end{theorem}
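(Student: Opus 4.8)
The implication (i)$\Rightarrow$(ii) is immediate from Theorem~\ref{Teter}: if $g(R)\leq 1$ then either $R$ is Gorenstein or there is an isomorphism $\m\lrar\m^\vee$ (one simply discards Teter's symmetry condition on the map), which is alternative (ii). So the content lies entirely in (ii)$\Rightarrow$(i). Assume $R$ is not Gorenstein and that \emph{some} isomorphism $\phi\colon\m\lrar\m^\vee$ is given. By Theorem~\ref{Teter}, to conclude $g(R)\leq 1$ it suffices to produce an isomorphism $\psi\colon\m\lrar\m^\vee$ that is \emph{symmetric}, i.e.\ $\psi(x)(y)=\psi(y)(x)$ for all $x,y\in\m$. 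The plan is to obtain $\psi$ by symmetrizing $\phi$.

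\smallskip
\noindent
First I would form the transpose $\phi^{t}\colon\m\rar\m^\vee$, $\phi^{t}(x)(y):=\phi(y)(x)$, and check that it is again an isomorphism: under the adjunction $\Hom_R(\m,\m^\vee)\cong\Hom_R(\m\otimes_R\m,\omega)$ the element $\phi^{t}$ is the image of $\phi$ under the (invertible) swap of the two tensor factors; equivalently, $\phi^{t}$ is the Matlis dual of $\phi$ carried along the canonical identification $\m\cong\m^{\vee\vee}$. Since $1/2\in R$, I would then set $\psi:=\tfrac12(\phi+\phi^{t})$. By construction $\psi(x)(y)=\tfrac12\bigl(\phi(x)(y)+\phi(y)(x)\bigr)$ is symmetric in $x$ and $y$, so the whole problem collapses to the single assertion that $\psi$ is still an isomorphism; granting that, Theorem~\ref{Teter} gives $g(R)\leq 1$.

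\smallskip
\noindent
Proving that $\psi$ is an isomorphism is the heart of the matter, and the step I expect to be the main obstacle. Since $\m$ and $\m^\vee$ have equal finite length, it is enough to show $\psi$ is injective, i.e.\ that its kernel meets $\soc(\m)=\soc(R)$ only in $0$. So suppose $0\neq x\in\soc(R)$ with $\psi(x)=0$, i.e.\ $\phi(x)=-\phi^{t}(x)$ in $\m^\vee$; one must derive a contradiction, and this is exactly where both extra hypotheses are forced into play. Because $\phi$ is an isomorphism, $\phi(\m^2)=\m\,\m^\vee$, and a short $R$-linearity computation shows $(\phi^{t}-\phi)(\m^2)\inc\m\,\m^\vee$; since $\soc(R)\inc\m^2$ this already yields $\phi(x)=-\tfrac12(\phi^{t}-\phi)(x)\in\m\,\m^\vee$ — consistent, but not yet contradictory. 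The delicate point is to upgrade this: using $1/2\in R$ to split off the symmetric part and iterating the ``$\soc(R)\inc\m^2$'' step (possibly after first replacing $\phi$ by $\phi\circ\alpha$ for a suitable $\alpha\in\mathrm{Aut}_R(\m)$), one wants to push $\phi(x)$ into ever deeper powers $\m^{j}\,\m^\vee$ until it must vanish, contradicting injectivity of $\phi$ on the socle. Hence $\psi$ is injective, so an isomorphism, and being symmetric it yields $g(R)\leq 1$. Both hypotheses are genuinely needed here: over a field, and more generally when $\soc(R)\not\inc\m^2$, the symmetrization $\tfrac12(\phi+\phi^{t})$ of an isomorphism can be degenerate.
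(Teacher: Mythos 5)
Your overall strategy is the right one, and it is close in spirit to how the result is actually established: (i)$\Rightarrow$(ii) is indeed immediate from Theorem~\ref{Teter}, and for (ii)$\Rightarrow$(i) the idea is to symmetrize the given isomorphism using $1/2$ and then invoke Theorem~\ref{Teter}. Note, though, that the paper under review does not prove this theorem at all — it is quoted from \cite{HV} — although Section~4 (Proposition~\ref{R4} together with Theorem~\ref{T:1}, specialized to $\ma=\m$) does recover exactly this statement.

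The genuine gap is precisely the step you flag as ``the heart of the matter'': you never actually prove that $\psi=\tfrac12(\phi+\phi^t)$ is still an isomorphism. The sketch you give does not close. Your one computation shows $(\phi^t-\phi)(xy)=x\cdot(\phi^t-\phi)(y)$, hence $(\phi^t-\phi)(\m^2)\subseteq\m\,\m^\vee$, so for $x\in\soc(R)\subseteq\m^2$ with $\psi(x)=0$ you get $\phi(x)\in\m\,\m^\vee=\phi(\m^2)$. But since $\phi$ is injective this only says $x\in\m^2$, which you already knew; nothing is gained. The proposed ``iterate to push $\phi(x)$ into deeper $\m^j\,\m^\vee$'' has no mechanism behind it — the first step consumed the hypothesis $x\in\m^2$, and there is no input left to feed a second step — and the parenthetical ``possibly after replacing $\phi$ by $\phi\circ\alpha$'' is a placeholder for an idea rather than an argument. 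In short, the part of the proof that carries all the weight is asserted, not proved.

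For comparison, the argument that actually works (this is Proposition~\ref{R4}, which Huneke--Vraciu prove at the level of $\omega^*$ rather than $\Hom_R(\m,\m^\vee)$) runs as follows. Use Lemma~\ref{L1} to trade $\phi$ for a surjection $f\colon\omega\twoheadrightarrow\m$ with $\ker f=(0:_\omega\m)$, and set $h=f+f^*$. One first shows the key containment $\ker(h)\cap\m\omega\subseteq\ker(f)$: any element of $\m\omega$ is $\sum f(x_i)y_i$, and if it lies in $\ker(h)$ then for every $w\in\omega$ one has $0=\sum f(x_i)\bigl(f(y_i)w+f(w)y_i\bigr)=2\sum f(x_i)f(y_i)w$, using the identity $f(w)f(x)y=f(w)f(y)x$ (which comes from $\ker(f)\cdot f(\omega)=0$); faithfulness of $\omega$ and invertibility of $2$ then force $\sum f(x_i)f(y_i)=0$, i.e.\ the element is in $\ker f$. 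From this one deduces $\ker(h)\subseteq(0:_\omega\m^2)$, and the hypothesis $\soc(R)\subseteq\m^2$ translates (by duality) into $(0:_\omega\m^2)\subseteq\m\omega$, closing the loop: $\ker(h)\subseteq\ker(h)\cap\m\omega\subseteq\ker(f)$, so $h(\omega)=\m$ and $h$ is a symmetric replacement for $f$. It is this concrete computation — not a vanishing-by-iteration argument — that your write-up is missing. If you carry your transpose construction through the adjunction and identify $\phi^t$ with $f^*$, you can import Proposition~\ref{R4}.2--\ref{R4}.4 verbatim and complete the proof; as written, however, the proposal has an unfilled hole at its central step.
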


A natural question one can ask is whether we can characterize Artinian local rings whose Gorenstein colength is at most two. In section 5, we prove the main theorem in this paper (Theorem \ref{T1}), which is an extension of Teter's theorem. We also extend the Huneke-Vraciu theorem and as a consequence, show the following:

\vtwo

\noindent
{\bf Theorem \ref{T2} }{\it With notation as in Setup {\rm \ref{S1}}, suppose that $\mb \inc \m_T^6$. Moreover, assume that $2$ is invertible in $R$. Then the following are equivalent:\\
{\rm i)} $g(R) \leq 2$.\\
{\rm ii)} There exists an ideal $\overline \ma \inc R$ with $\lambda(R/\overline \ma) \leq 2$ such that $\overline{\ma} \simeq \overline{\ma}^\vee$.
 }
\vtwo
We record some properties of Gorenstein colength in section 2. In section 3, we investigate the role played by self-dual ideals in the study of Gorenstein colength. As can be seen in Lemma \ref{L1}, maps from the canonical module $\omega$ to $R$ are closely related to self-dual ideals. We study these maps via an involution on $\omega^*$ in section 4 and as an application, prove Theorem \ref{T:1}. This theorem gives an upper bound on the Gorenstein colength of rings which have an algebra retract with respect to a self-dual ideal.

\section{\large  More on Gorenstein Colength}

\begin{proposition}\label{P1} Let $(R,\m,\sk)$ be an Artinian local ring. Then $g(R)$ is finite. 
\end{proposition}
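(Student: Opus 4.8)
The plan is to exhibit at least one Gorenstein Artin local ring mapping onto $R$, since $g(R)$ is defined as a minimum over a nonempty set of nonnegative integers and hence is automatically finite once nonemptiness is established. The natural candidate comes from the canonical module. Recall that for an Artinian local ring $R$ the trivial extension (idealization) $S := R \ltimes \omega_R$ is an Artinian local ring with maximal ideal $\m \oplus \omega_R$, and it is a classical fact (Reiten, Foxby) that $R \ltimes \omega_R$ is Gorenstein precisely because $\omega_R$ is the canonical module of $R$. Moreover the projection $S = R \ltimes \omega_R \to R$ onto the first coordinate is a surjective ring homomorphism, so $S$ maps onto $R$. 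This already shows the set in Definition \ref{D0} is nonempty, whence $g(R) < \infty$; in fact it yields the explicit bound $g(R) \leq \lambda(\omega_R) = \lambda(R)$.

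Concretely, I would argue as follows. First recall the construction of $S = R \ltimes \omega$: as an $R$-module it is $R \oplus \omega$, with multiplication $(r,m)(r',m') = (rr', rm' + r'm)$. Check that $S$ is a commutative Noetherian local ring of finite length, with $\lambda(S) = \lambda(R) + \lambda(\omega) = 2\lambda(R)$, maximal ideal $\m_S = \m \oplus \omega$, and residue field $\sk$. Next, observe that the canonical module of $S$ is $\Hom_R(S, \omega_R)$, and using the adjunction $\Hom_R(R \oplus \omega, \omega) \simeq \omega \oplus \Hom_R(\omega,\omega) \simeq \omega \oplus R \simeq S$ as $S$-modules, one gets $\omega_S \simeq S$, i.e. $S$ is Gorenstein Artin. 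Finally, the map $\pi\colon S \to R$, $(r,m) \mapsto r$, is a surjective ring homomorphism, so $S$ is a Gorenstein Artin local ring mapping onto $R$, and therefore $\lambda(S) - \lambda(R) = \lambda(R)$ lies in the set defining $g(R)$.

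The only genuinely substantive point is the identification $\omega_S \simeq S$, i.e. that idealizing by the canonical module produces a Gorenstein ring; everything else is bookkeeping. This can either be cited from the literature on trivial extensions, or proved directly by verifying that $\Hom_R(S,\omega_R)$ is a free $S$-module of rank one via the adjunction isomorphism above, being careful that the isomorphism is one of $S$-modules and not merely of $R$-modules. Alternatively, if one prefers to avoid the idealization entirely, one can use Setup \ref{S1}.2 to write $R = T/\mb$ with $T$ regular local and $\mb$ an $\m_T$-primary ideal, and then produce a Gorenstein quotient of $T$ mapping onto $R$ by choosing an $\m_T$-primary irreducible (hence Gorenstein) ideal contained in $\mb$ — for instance a suitable ideal generated by powers of a regular system of parameters together with generators adjusted to lie inside $\mb$ — but the idealization argument is cleaner and gives the sharper bound $g(R)\le\lambda(R)$ directly.
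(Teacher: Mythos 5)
Your proof is correct, but it is not the route the paper takes for this proposition; in fact, your idealization argument is essentially verbatim the paper's proof of the \emph{next} statement, Proposition~\ref{P2} ($g(R)\le\lambda(R)$), and the paper cites \cite{BH}, Theorem 3.3.6 for the fact that $R\ltimes\omega$ is Gorenstein rather than reproving it via the adjunction computation you sketch. For Proposition~\ref{P1} itself the paper instead uses the structure-theoretic route you mention only in passing at the end: write $R\simeq T/\mb$ with $(T,\m_T,\sk)$ regular local of dimension $d$ and $\mb$ $\m_T$-primary, choose a full regular sequence $x_1,\ldots,x_d\in\mb$, and take $S:=T/(x_1,\ldots,x_d)$, an Artinian complete intersection (hence Gorenstein) surjecting onto $R$. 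The two approaches buy different things: your idealization immediately gives the explicit and sharp-looking bound $g(R)\le\lambda(R)$, subsuming Proposition~\ref{P2}; the paper's regular-sequence construction, refined by taking $(x_1,\ldots,x_d)$ to be a minimal reduction of $\mb$ when $\sk$ is infinite, gives the different bound $g(R)\le e(\mb)-\lambda(R)$, and the paper then exhibits (Example~\ref{E1}) a case where $g(R)$ is strictly smaller than both bounds, which is why it keeps the two propositions separate. So your argument is valid and clean, but be aware it collapses two of the paper's results into one and loses the complementary multiplicity bound.
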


\begin{proof}
As in Setup \ref{S1}.2, write $R \simeq T/\mb$ where $(T,\m_T,\sk)$ is a regular local ring  and $\mb$ is an $\m_T$-primary ideal. If $\dim(T) = d$, choose a regular sequence \vec{x}{1}{d} in $\mb$ and set $S:= T/(\vec{x}{1}{d})$. Then $S$ is a Gorenstein Artin local ring mapping onto $R$. 
Thus $g(R) \leq \lambda(S) - \lambda(R)$ which is finite. \end{proof}

 In fact, the above proof shows that if $\sk$ is infinite, then by choosing $(\vec{x}{1}{d})$ to be a minimal reduction of $\mb$, we see that $g(R) \leq e(\mb) - \lambda(R),$ where $e(\mb) = \lambda(S)$ is the multiplicity of $\mb$.\\
 
\begin{proposition}\label{P2} Let $(R,\m,\sk)$ be an Artinian local ring. Then $g(R) \leq \lambda(R).$ \end{proposition}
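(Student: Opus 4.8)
The plan is to produce one explicit Gorenstein Artin local ring surjecting onto $R$ whose length is exactly $2\lambda(R)$. The natural choice is the trivial extension (idealization) $S := R \ltimes \omega$ of $R$ by its canonical module $\omega = \omega_R$: as an $R$-module $S = R \oplus \omega$, with multiplication $(r,x)(r',x') = (rr',\, rx' + r'x)$. First I would record the elementary structural facts: the set of non-units of $S$ is $\m \oplus \omega$, so $S$ is local with maximal ideal $\m_S = \m \oplus \omega$; since $R$ is Artinian and $\omega$ has finite length, $S$ is Artinian; and the projection $(r,x) \mapsto r$ is a surjective ring homomorphism $S \twoheadrightarrow R$, so $S$ maps onto $R$ in the sense of Definition \ref{D0}.

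Next I would compute $\lambda(S)$ and verify that $S$ is Gorenstein. For the length, $\lambda(S) = \lambda_R(R \oplus \omega) = \lambda(R) + \lambda(\omega)$, and since $R$ is Artinian, $\omega$ is the injective hull $E_R(\sk)$ and Matlis duality preserves length, so applying $(\-)^\vee = \Hom_R(\-,\omega)$ to $R$ gives $\lambda(\omega) = \lambda(R^\vee) = \lambda(R)$; hence $\lambda(S) = 2\lambda(R)$. For Gorensteinness it is enough to check that $\soc(S)$ is one-dimensional over $\sk$. If $(r,x)\,\m_S = 0$, then taking the product with elements $(0,x')$ forces $rx' = 0$ for all $x' \in \omega$; because $\omega = E_R(\sk)$ is a faithful $R$-module this gives $r = 0$, and then the product with elements $(r',0)$, $r' \in \m$, shows $x \in (0 :_\omega \m) = \soc(\omega)$, which is one-dimensional since $\soc(E_R(\sk)) \cong \sk$. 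Thus $\soc(S) = 0 \oplus \soc(\omega) \cong \sk$, so the Artin local ring $S$ is Gorenstein.

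Combining the two points, $S$ is a Gorenstein Artin local ring mapping onto $R$ with $\lambda(S) - \lambda(R) = \lambda(R)$, which yields $g(R) \le \lambda(R)$. There is no serious obstacle here; the only inputs are the standard facts that over an Artinian local ring the canonical module equals $E_R(\sk)$, that it is faithful with one-dimensional socle, and that its length equals $\lambda(R)$ — alternatively one may simply cite the known result that the idealization of a Cohen--Macaulay local ring by its canonical module is Gorenstein, bypassing the socle computation entirely.
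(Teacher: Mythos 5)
Your proposal is correct and uses exactly the same construction as the paper: the Nagata idealization $S = R \ltimes \omega$, the natural projection onto $R$, and $\lambda(S) = 2\lambda(R)$. The only difference is that the paper cites Bruns--Herzog for the Gorensteinness of the idealization, whereas you verify it directly by computing $\soc(S) = 0 \oplus \soc(\omega) \cong \sk$; both are fine, and your socle computation is accurate.
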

\begin{proof}

Let $\omega$ be the canonical module of $R$. We can define a ring structure on $S := R \oplus \omega$ using Nagata's principle of idealization. It is a well-known fact (eg. \cite{BH}, Theorem 3.3.6) that $S$ is Gorenstein. Since $\lambda(S) = 2 \lambda (R)$, and $S$ maps onto $R$ via the natural projection, 
$g(R) \leq \lambda(R)$. 
\end{proof}

\begin{example}\label{E1}{\rm
In this example, we see that $g(R) < \min \{e(\mb) - \lambda(R), \lambda(R)\}$ with notation as above. 

Let $T = \Q[X,Y,Z]$, $\mb = (X,Y,Z)^2$ and $R = T/\mb$. We have $e(\mb) = 8$ and $\lambda(R) = 4$. Let $\mc = (X^2 - Y^2,X^2 - Z^2, XY, XZ, YZ)$ and $S = T/\mc$. Then $S$ is a Gorenstein Artin ring that maps onto $R$. Since $\lambda(S) = 5$ and $R$ is not Gorenstein, we see that $g(R) = 1$. }
\end{example}
\vone

Let $(R,\m,\sk)$ be an Artinian local ring. The main questions one would like to answer are the following: 

\noindent
a) How does one intrinsically compute $g(R)$? 

\noindent
b) How does one construct a Gorenstein Artin local ring $S$ mapping onto $R$ such that $\lambda(S) - \lambda(R) = g(R)$?

\section{\large Gorenstein Colength and Self-dual Ideals}

\begin{definition}{\rm
Let $(R,\m,\sk)$ be an Artinian local ring with canonical module $\omega$. Recall that by $(\-)^\vee$, we mean $\Hom_R(\-,\omega)$. We say that an ideal $\ma \inc R$ is self-dual if $\ma \simeq \ma^\vee$.
}\end{definition}

As one can see from the Huneke-Vraciu theorem and Theorem \ref{T2}, Gorenstein colength is closely related to self-dual ideals. 

\begin{definition}\label{D2}{\rm 
We say that the map $f: \omega \lrar \ma$ $($resp. $\phi: \ma \lrar \ma^\vee)$ satisfies {\it Teter's condition} if the commutativity condition $f(x)y = f(y)x$ for all $x$,$y \in \omega$ $($resp. $\phi(x)(y) = \phi(y)(x)$ for all $x$, $y \in \ma)$ is satisfied.
}\end{definition}

\begin{remark}\label{R0.75}{\rm
Let $\ma \overset{i}\into R$. This induces a surjective map $\omega \overset{i^\vee}\onto \ma^\vee$, 
such that for every $a \in \ma$ and $u \in \omega$, $i^\vee(u)(a) = au.$
}\end{remark}

\noindent
The following lemma tells us how self-dual ideals arise. 

\begin{lemma}\label{L1}
Let $\ma$ be an ideal in $R$. The following are equivalent:\\
{\rm i)} There is an isomorphism $\phi: \ma \overset{\sim}\lrar \ma^\vee$.\\
{\rm ii)} There is a surjective map $\omega \overset{f}\onto \ma$ such that $\ker(f) = (0 :_\omega \ma)$.  \\
Moreover $\phi$ satisfies Teter's condition if and only if $f$ satisfies Teter's condition.
\end{lemma}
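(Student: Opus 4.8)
The plan is to realize both conditions through the canonical surjection $i^\vee\colon\omega\onto\ma^\vee$ of Remark~\ref{R0.75}, using that Matlis duality makes $(-)^\vee$ exact on finitely generated $R$-modules (so $i^\vee$ is indeed onto) and that $R^\vee\simeq\omega$. In other words, $i^\vee$ is, up to an isomorphism on its target, already the map demanded in (ii), and Teter's condition can be read off from the explicit formula $i^\vee(u)(a)=au$.

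First I would identify $\ker(i^\vee)$. Since, by Remark~\ref{R0.75}, $i^\vee(u)$ is the homomorphism $a\mapsto au$, it vanishes exactly when $au=0$ for all $a\in\ma$; hence $\ker(i^\vee)=(0:_\omega\ma)$. For (i)$\Rightarrow$(ii), given an isomorphism $\phi\colon\ma\overset{\sim}\lrar\ma^\vee$, I set $f:=\phi^{-1}\circ i^\vee\colon\omega\to\ma$: it is surjective, being a composite of a surjection with an isomorphism, and $\ker(f)=\ker(i^\vee)=(0:_\omega\ma)$. For (ii)$\Rightarrow$(i), if $f\colon\omega\onto\ma$ has $\ker(f)=(0:_\omega\ma)=\ker(i^\vee)$, then $f$ and $i^\vee$ share a kernel, so there is a unique map $\phi\colon\ma\to\ma^\vee$ with $\phi\circ f=i^\vee$, and it is an isomorphism because $f$ and $i^\vee$ both induce isomorphisms out of $\omega/(0:_\omega\ma)$ (alternatively, $\phi$ is surjective and $\lambda(\ma)=\lambda(\ma^\vee)$).

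For the ``moreover'' clause I would use the identity $\phi\circ f=i^\vee$ --- which holds in both directions of the argument --- together with $i^\vee(u)(a)=au$ and the surjectivity of $f$. Given $x,y\in\omega$, put $a=f(x)$ and $b=f(y)$ in $\ma$; then $f(x)\,y=a\,y=i^\vee(y)(a)=\phi(f(y))(f(x))=\phi(b)(a)$, and symmetrically $f(y)\,x=\phi(a)(b)$. Thus $f(x)y=f(y)x$ for all $x,y\in\omega$ if and only if $\phi(a)(b)=\phi(b)(a)$ for all $a,b$ in the image of $f$, i.e.\ for all $a,b\in\ma$; this is exactly the statement that $f$ satisfies Teter's condition if and only if $\phi$ does.

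I do not anticipate a genuine obstacle here: the whole argument is formal once Remark~\ref{R0.75} and exactness of Matlis duality are in hand. The only point demanding a little care is verifying in (ii)$\Rightarrow$(i) that the induced $\phi$ is well defined and bijective, which is immediate from $\ker f=\ker i^\vee$.
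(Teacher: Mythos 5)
Your proposal is correct and follows essentially the same route as the paper: both directions go through the canonical surjection $i^\vee\colon\omega\onto\ma^\vee$ with kernel $(0:_\omega\ma)$, setting $f=\phi^{-1}\circ i^\vee$ in one direction and factoring $i^\vee$ through $\omega/\ker(f)$ in the other, and the ``moreover'' clause is verified from the identity $\phi(f(u))(f(v))=f(v)u$ exactly as in the paper's proof. The only cosmetic difference is that you read off $\ker(i^\vee)=(0:_\omega\ma)$ directly from the formula $i^\vee(u)(a)=au$ instead of invoking the induced short exact sequence.
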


\begin{proof} \hfill{}\\
(i) $\Rightarrow$ (ii): Apply $\Hom(\-,\omega)$ to the short exact sequence $\ses{\ma}{R}{R/\ma}{i}{}$ to get the short exact sequence $\ses{(0 :_{\omega} \ma)}{\omega}{{\ma^\vee}}{}{{i^\vee}}$. Let $f = \phi^{-1} \circ i^\vee : \omega \onto \ma$. Since $\phi$ is an isomorphism, $\ker(f) = \ker(i^\vee) = (0 :_\omega \ma)$.

\noindent
(ii) $\Rightarrow$ (i): Comparing the short exact sequences \ses{{(0:_\omega \ma)}}{\omega}{{\ma ^\vee}}{}{{i^\vee}} and \ses{{\ker(f)}}{\omega}{{\ma}}{}{f}, we get an isomorphism $\phi : \ma \overset{\sim}\lrar \ma^\vee$. 

For $u$, $v \in \omega$, it follows from Remark \ref{R0.75} that $\phi(f(u))(f(v)) = f(v)u$. Thus, $f$ satisfies Teter's condition if and only if $\phi$ does, proving the last part of the lemma.
\end{proof}

Let us now see what happens when a Gorenstein Artin local ring $S$ maps onto the given Artinian local ring $R$. We summarize our observations in the next proposition. These lead to lower bounds on $g(R)$.
  
\begin{proposition}\label{KO} Let $(S,\m_S,\sk)$ be a Gorenstein Artin local ring and $(R,\m,\sk)$ be an Artinian local ring with canonical module $\omega$. Let $\psi:S \lrar R$ be a surjective ring homomorphism such that $\ker(\psi) = \mb$. Then\\
{\rm 1)} $\omega$ is isomorphic to an ideal in $S$,\\
{\rm 2)} $\ker(f) \cdot f(\omega) = 0$ where $f = \psi|_\omega$ and\\
{\rm 3)} $f:\omega \lrar R$ satisfies Teter's condition.
\end{proposition}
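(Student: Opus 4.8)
The plan is to let part (1) carry the weight: once $\omega$ has been realized concretely as an ideal of $S$, parts (2) and (3) reduce to one-line computations. For (1), recall that since $S$ is Gorenstein Artin it is self-injective, i.e.\ $\omega_S \cong S$ (equivalently $S \cong E_S(\sk)$). As $R = S/\mb$ and both rings are Artinian, the standard change-of-rings formula for canonical modules (here $\Ext^c_S(-,\omega_S)$ with $c = \dim S - \dim R = 0$) gives
\[
\omega \;\cong\; \Hom_S(R,\omega_S) \;\cong\; \Hom_S(S/\mb,\,S) \;=\; (0 :_S \mb) \;=:\; \mc ,
\]
an ideal of $S$ (cf.\ \cite{BH}); this is exactly (1). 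From here on I identify $\omega$ with $\mc$, so that $f = \psi|_\omega$ is literally the restriction of $\psi$ to $\mc$. Then $f(\omega) = \psi(\mc)$ is an ideal of $R$ and $\ker(f) = \mc \cap \mb$.

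For (2) and (3) the one fact needed is a description of the $R$-module structure on $\mc$: for $x \in \mc$ and $\bar s \in R = S/\mb$, the action is $\bar s \cdot x = sx$ (product in $S$) for any lift $s \in S$ of $\bar s$, which is well defined precisely because $\mb\,\mc = 0$. For (3): given $x,y \in \mc$, choosing $x$ (resp.\ $y$) as a lift of $f(x) = \psi(x)$ (resp.\ $f(y)$) gives $f(x)\cdot y = xy$ and $f(y)\cdot x = yx$; these coincide since $S$ is commutative, which is Teter's condition for $f$. For (2): if $k \in \ker(f) = \mc \cap \mb$ and $u \in \omega = \mc$, then $f(u)\cdot k = uk = 0$ because $u \in (0 :_S \mb)$ and $k \in \mb$; as the elements $f(u)$, $u \in \omega$, generate $f(\omega)$ as an ideal of $R$, this yields $\ker(f)\cdot f(\omega) = 0$.

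The only genuine obstacle is (1): one must invoke the identification $\omega_{S/\mb} \cong (0 :_S \mb)$, and this is the single place where the hypothesis that $S$ is \emph{Gorenstein} — not merely that it surjects onto $R$ — is used. If a self-contained argument is preferred, one can instead check directly that $(0 :_S \mb)$, viewed as an $R$-module, is injective with one-dimensional socle and hence equals $E_R(\sk) = \omega$. After that, (2) and (3) are immediate from the commutativity of $S$; the only subtlety there is the well-definedness of the $R$-action on $\mc$, which is exactly the defining property $\mb\,\mc = 0$. It is worth noting that the three conclusions are precisely the data that feed into the analysis of Lemma \ref{L1}, which is why they are packaged together here.
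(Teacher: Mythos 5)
Your proof is correct and follows essentially the same route as the paper's: realize $\omega$ as $(0:_S\mb)$ via the change-of-rings isomorphism, then read off (2) and (3) from the commutativity of $S$ and the fact that $(0:_S\mb)\cdot\mb=0$. The one small value-add in your write-up is making explicit that the $R$-module action on $(0:_S\mb)$ via lifting is well defined precisely because $\mb\,(0:_S\mb)=0$, a point the paper leaves implicit.
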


\begin{proof}\hfill{}\\
1) $S$ is a Gorenstein ring of the same dimension mapping onto $R$. Therefore $\omega \simeq \Hom_S(R,S)$ $\simeq (0 :_S \mb) \inc S.$

\noindent
2) We have $\omega \simeq (0 :_S \mb$), $f(\omega) \simeq ((0:_S\mb) + \mb)/\mb$ and $\ker(f) \simeq \mb \cap\ (0:_S \mb)$. Hence $\ker(f) \cdot f(\omega) = 0$.

\noindent
3) Since the elements of $\omega$ can be identified with elements of $S$, for any $x$, $y$ in $\omega$, $f(x)y = f(y) x$.
\end{proof}

\begin{corollary}\label{C0.25}
With notation as in Proposition {\rm \ref{KO}}, the ideal $\ma := f(\omega)$ is self-dual.
\end{corollary}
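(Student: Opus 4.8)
The plan is to reduce the statement to Lemma \ref{L1}. By that lemma, $\ma = f(\omega)$ is self-dual as soon as we exhibit a surjection $\omega \onto \ma$ whose kernel is exactly $(0 :_\omega \ma)$; and $f = \psi|_\omega$ is, by construction, such a surjection once we know $\ker(f) = (0 :_\omega \ma)$. So the whole proof comes down to verifying this one equality of submodules of $\omega$.

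One inclusion is already in hand: Proposition \ref{KO}(2) gives $\ker(f)\cdot f(\omega) = 0$, i.e. $aw = 0$ in $\omega$ whenever $w \in \ker(f)$ and $a \in \ma$, so $\ker(f) \inc (0 :_\omega \ma)$. For the reverse inclusion I would argue by lengths. Since $R$ is Artinian, $\lambda(\omega) = \lambda(R)$; the exact sequence $\ses{\ker(f)}{\omega}{\ma}{}{f}$ then gives $\lambda(\ma) = \lambda(R) - \lambda(\ker f)$. On the other hand, applying $(\-)^\vee$ to $\ses{\ma}{R}{R/\ma}{i}{}$ yields the exact sequence $\ses{(0 :_\omega \ma)}{\omega}{\ma^\vee}{}{i^\vee}$ exactly as in the proof of Lemma \ref{L1}, so $\lambda(\ma^\vee) = \lambda(R) - \lambda((0 :_\omega \ma))$. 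Because $(\-)^\vee = \Hom_R(\-,\omega)$ is an exact, length-preserving (Matlis) duality on finite-length modules, $\lambda(\ma^\vee) = \lambda(\ma)$, and comparing the two displays forces $\lambda(\ker f) = \lambda((0 :_\omega \ma))$. Together with $\ker(f) \inc (0 :_\omega \ma)$ this gives equality, and Lemma \ref{L1} finishes the proof.

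A more hands-on alternative is to work inside $S$: identifying $\omega$ with $(0 :_S \mb) \inc S$ as in Proposition \ref{KO}(1), one computes $\ker(f) = \mb \cap (0 :_S \mb)$, while translating the $R$-action on $\omega$ into multiplication in $S$ gives $(0 :_\omega \ma) = (0 :_S \mb) \cap (0 :_S (0 :_S \mb))$; since $S$ is Gorenstein Artinian, $(0 :_S (0 :_S \mb)) = \mb$, and the two agree. Either way there is no real obstacle; the only point that needs a little care is to make sure the annihilator $(0 :_\omega \ma)$, a priori computed intrinsically in $R$, is correctly matched with the structure $\omega$ inherits from being an ideal of $S$. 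After that, the corollary is essentially a restatement of Lemma \ref{L1} and Proposition \ref{KO}.
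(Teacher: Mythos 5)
Your main argument is correct and is essentially the paper's proof: reduce to $\ker(f) = (0:_\omega \ma)$ via Lemma \ref{L1}, get one inclusion from Proposition \ref{KO}.2, and force equality by a length count using the two short exact sequences and the fact that $(\-)^\vee$ preserves length. Your second, ``hands-on'' alternative is also sound and is a genuinely different (and arguably more illuminating) route: after identifying $\omega$ with $(0:_S\mb)$, one has $\ker(f) = \mb \cap (0:_S\mb)$ and $(0:_\omega\ma) = (0:_S\mb)\cap(0:_S(0:_S\mb))$, and the Gorenstein double-annihilator identity $(0:_S(0:_S\mb)) = \mb$ makes the two sets literally equal, avoiding the length-count entirely; the paper opts for the duality computation presumably because it stays intrinsic to $R$ and $\omega$, while your alternative exploits the ambient Gorenstein ring $S$ more directly.
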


\begin{proof}
In order to prove that $\ma$ is self-dual, by Lemma \ref{L1}, we only need to show that $\ker(f) = (0 :_\omega \ma)$. Since $\ker(f) \inc (0 :_\omega \ma)$ by Proposition \ref{KO}.2, it is enough to prove that their lengths are the same. Since $(0 :_\omega \ma) \simeq (R/\ma)^\vee$, $\lambda(0 :_\omega \ma) = \lambda(R/\ma) = \lambda(\omega) - \lambda(\ma) = \lambda(\ker(f))$, finishing the proof. 
\end{proof}

\begin{lemma}\label{L2} With notation as in Proposition {\rm \ref{KO}}, $\lambda(S) - \lambda(R) \geq\lambda(R/\psi(\omega))$.\\
Moreover equality holds, i.e., $\lambda(S) - \lambda(R) = \lambda(R/\psi(\omega))$ if and only if $\mb^2 = 0$.
\end{lemma}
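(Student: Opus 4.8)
The plan is to work inside the Gorenstein ring $S$, using the identifications established in Proposition \ref{KO}. Write $\mb = \ker(\psi)$ and identify $\omega$ with $(0:_S \mb) \inc S$, so that $\psi(\omega) = ((0:_S\mb) + \mb)/\mb$ and $R/\psi(\omega) \simeq S/((0:_S\mb)+\mb)$. The quantity $\lambda(S) - \lambda(R)$ equals $\lambda(\mb)$. So the inequality to prove is $\lambda(\mb) \geq \lambda\bigl(S/((0:_S\mb)+\mb)\bigr)$, and the equality case should come down to analyzing when this is tight.

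First I would exploit Gorenstein duality in $S$: since $S$ is Gorenstein Artinian, the pairing given by the canonical module is perfect, and for any ideal $I \inc S$ one has $\lambda(S/I) = \lambda(0:_S I)$ and $(0:_S(0:_S I)) = I$. Applying this with $I = \mb$ gives $\lambda(0:_S\mb) = \lambda(S/\mb) = \lambda(R)$, and applying it with $I = (0:_S\mb)$ gives back $\mb$. The key computation is then $\lambda\bigl((0:_S\mb)+\mb\bigr) = \lambda(0:_S\mb) + \lambda(\mb) - \lambda\bigl((0:_S\mb)\cap\mb\bigr)$, hence
\[
\lambda\bigl(S/((0:_S\mb)+\mb)\bigr) = \lambda(S) - \lambda(R) - \lambda(\mb) + \lambda\bigl((0:_S\mb)\cap\mb\bigr) = \lambda\bigl((0:_S\mb)\cap\mb\bigr),
\]
using $\lambda(S)-\lambda(R) = \lambda(\mb)$. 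So the statement reduces to: $\lambda(\mb) \geq \lambda\bigl(\mb\cap(0:_S\mb)\bigr)$ always, with equality iff $\mb^2 = 0$. The inequality is immediate since $\mb\cap(0:_S\mb) \inc \mb$. For the equality case, equality holds iff $\mb \inc (0:_S\mb)$, i.e. iff $\mb\cdot\mb = 0$, i.e. iff $\mb^2 = 0$. (Equivalently, one identifies $\mb\cap(0:_S\mb)$ with $\ker(f)$ as in Proposition \ref{KO}.2 and reads off the same conclusion.)

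The main obstacle is setting up the length bookkeeping cleanly — in particular being careful that $\lambda(R/\psi(\omega)) = \lambda\bigl(\mb\cap(0:_S\mb)\bigr)$ via Gorenstein duality, rather than something off by the wrong intersection — and making sure the duality identities $(0:_S(0:_S\mb)) = \mb$ and $\lambda(S/\mb) = \lambda(0:_S\mb)$ are invoked correctly. Once the expression for $\lambda(R/\psi(\omega))$ collapses to $\lambda\bigl(\mb\cap(0:_S\mb)\bigr)$, both the inequality and the characterization of the equality case are formal. It may be cleanest to phrase the whole argument in terms of $\ker(f) = \mb\cap(0:_S\mb)$ from the outset, so that Proposition \ref{KO}.2 does the heavy lifting and the only new input is the duality identity $\lambda(R/\psi(\omega)) = \lambda(\ker f)$.
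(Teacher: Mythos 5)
Your proposal is correct and follows essentially the same route as the paper: both identify $\omega$ with $(0:_S\mb)$ inside $S$, use $\lambda(0:_S\mb)=\lambda(S/\mb)=\lambda(R)$, and reduce the inequality to a comparison of lengths of ideals in $S$, with the equality case coming down to $\mb\inc(0:_S\mb)$. The only cosmetic difference is that the paper phrases the comparison as $\lambda(S/(0:_S\mb))\geq\lambda(S/((0:_S\mb)+\mb))$ via the inclusion $(0:_S\mb)\inc(0:_S\mb)+\mb$, whereas you phrase it as $\lambda(\mb)\geq\lambda(\mb\cap(0:_S\mb))$ via inclusion-exclusion; these are the same observation.
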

\begin{proof} Let $\ma = \psi(\omega)$. The  isomorphism $\omega \simeq (0:_S\mb)$ in $S$ yields $\lambda(R) = \lambda((0:_S\mb)$. Since $S/(\omega + \mb) \simeq R/\ma$, the lemma is proved if we show $\lambda(S) - \lambda(0:_S \mb) \geq \lambda(R/\ma),$ i.e., if $\lambda(S/(0:_S \mb)) \geq \lambda(S/((0:_S\mb) + \mb))$.

But this is always true. Moreover, equality holds, i.e., $\lambda(S) - \lambda(R) = \lambda(\omega) - \lambda(\ma)$ if and only if $\mb \inc (0:_S\mb)$, i.e., $\mb^2 = 0$. 
\end{proof}

The following is a useful consequence of the above lemma, which gives us a lower bound on $g(R)$.

\vtwo

\begin{corollary} \label{C0}Let $(R,\m,\sk)$ be an Artinian local ring with canonical module $\omega$. Then $g(R) \geq \min\{\lambda(R/\ma): \ma \inc R\text{ is a self-dual ideal}\}$. In particular, $g(R) \geq \lambda(R/(\omega^*(\omega)))$.
\end{corollary}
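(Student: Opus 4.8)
The plan is to deduce both statements from Lemma \ref{L2} together with Corollary \ref{C0.25}. Start by unwinding the definition of $g(R)$: choose a Gorenstein Artin local ring $S$ with a surjection $\psi : S \onto R$ realizing the minimum, so that $g(R) = \lambda(S) - \lambda(R)$, and put $\mb = \ker(\psi)$. By Lemma \ref{L2} we immediately get $g(R) = \lambda(S) - \lambda(R) \geq \lambda(R/\psi(\omega))$. Now set $\ma := \psi(\omega) = f(\omega)$ with $f = \psi|_\omega$ as in Proposition \ref{KO}; Corollary \ref{C0.25} tells us precisely that this $\ma$ is a self-dual ideal of $R$. Hence $g(R) \geq \lambda(R/\ma) \geq \min\{\lambda(R/\mathfrak{a}) : \mathfrak{a} \inc R \text{ self-dual}\}$, which is the first assertion.

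For the ``in particular'' statement, the point is to identify the ideal $\omega^*(\omega)$ as the smallest self-dual ideal available, i.e., to show it is itself self-dual and is contained in every $\psi(\omega)$ arising as above — or, more directly, that the minimum in the first assertion equals $\lambda(R/(\omega^*(\omega)))$. Recall $\omega^*(\omega) = \sum_{g \in \Hom_R(\omega,R)} g(\omega)$ is the trace ideal of $\omega$. One way to see the inequality $g(R) \geq \lambda(R/\omega^*(\omega))$ without reproving self-duality: for the specific $S$ above, $f = \psi|_\omega : \omega \to R$ is an $R$-linear map, hence factors through $\omega^*(\omega)$, so $\psi(\omega) = f(\omega) \inc \omega^*(\omega)$ and therefore $\lambda(R/\psi(\omega)) \geq \lambda(R/\omega^*(\omega))$. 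Combined with the first line, $g(R) \geq \lambda(R/\psi(\omega)) \geq \lambda(R/\omega^*(\omega))$. (If one prefers to phrase it as ``$\omega^*(\omega)$ is self-dual and realizes the minimum,'' one checks via Lemma \ref{L1} that the natural surjection $\omega \onto \omega^*(\omega)$ has kernel $(0:_\omega \omega^*(\omega))$, using a length count exactly as in the proof of Corollary \ref{C0.25}; but for the stated inequality this is not needed.)

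The step I expect to require the most care is confirming that the $S$ realizing $g(R)$ does fall under the hypotheses of Proposition \ref{KO} and Lemma \ref{L2} — namely that $S$ and $R$ have the same (zero) dimension so that $\Hom_S(R,S) \simeq (0:_S\mb)$ genuinely computes $\omega_R$, and that the surjection is a ring homomorphism with kernel $\mb$. Since any Gorenstein Artin local ring mapping onto $R$ is automatically zero-dimensional, this is really just bookkeeping, but it is the hinge that lets us invoke the earlier results. Everything else is a short chain of inequalities and one application of the self-duality already established in Corollary \ref{C0.25}.
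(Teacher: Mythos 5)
Your proof is correct and follows essentially the same route as the paper: apply Lemma \ref{L2} together with Corollary \ref{C0.25} to any Gorenstein surjection $\psi: S \onto R$, then observe that the image $\psi(\omega)$ lands inside $\omega^*(\omega)$. The only cosmetic difference is that for the ``in particular'' clause you argue directly that $f(\omega) \inc \omega^*(\omega)$ for the specific $f = \psi|_\omega$, whereas the paper notes via Lemma \ref{L1} that \emph{every} self-dual ideal is contained in $\omega^*(\omega)$; both give the same chain of inequalities.
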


\begin{proof}
Let $S$ be any Gorenstein Artin local ring and $\psi: S \onto R$ be a surjective ring homomorphism. By Lemma \ref{L2}, $\lambda(S) - \lambda(R) \geq \lambda(R/\ma)$ and by Corollary \ref{C0.25}, $\ma$ is a self-dual ideal. Thus $g(R) \geq \min\{\lambda(R/\ma): \ma \simeq \ma^\vee\}$.

The last statement in the corollary follows from Lemma \ref{L1}, since $\ma \inc \omega^*(\omega)$ for every self-dual ideal $\ma$.
\end{proof}

\noindent
Thus, with notation as before, we see that 
\[
\begin{tabular}{|c|}
\hline
\\
$\lambda(R/(\omega^*(\omega))) \leq\hhalf \min\{\lambda(R/\ma): \ma \simeq \ma^\vee\}\hhalf \leq\hhalf  g(R)\hhalf \leq\hhalf  \lambda(R).$\\
\\
\hline
\end{tabular}
\]

\medskip
\noindent
A natural question at this juncture is the following:
\begin{question}\label{Q1}{\rm
Is $\min\{\lambda(R/\ma): \ma \simeq \ma^\vee\} =  g(R)$?
}\end{question}

\vone
\noindent
A stronger question one can ask is:
\begin{question}\label{Q2}{\rm
Given a self-dual ideal $\ma$ in $R$, is there a Gorenstein Artin local ring $S$ such that $\lambda(S) -\lambda(R) = \lambda(R/\ma)$?
}\end{question}

\vone

We answer Question \ref{Q2} in a special case in Theorem \ref{T:1}. The machinery we need to prove the theorem is developed in the next section.
\smallskip

\section{\large An Involution on $\omega^*$}

\begin{remark}\label{R}{\rm
Let $U$, $V$ and $W$ be $R$-modules. Consider the series of natural isomorphisms $$\Hom(U,\Hom(V,W)) \simeq \Hom(U \otimes V, W)$$ $$ \simeq \Hom(V \otimes U, W) \simeq \Hom(V,\Hom(U,W)).$$ Let $f^* \in \Hom(V,\Hom(U,W))$ be the image of a map $f \in \Hom(U,\Hom(V,W))$ under the series of isomorphisms. Then $f(u)(v) = f^*(v)(u)$ for all $u \in U$ and $v \in V$.  

Thus if $U = V$, we get an involution on $\Hom(U, \Hom(U,W))$ induced by the involution $u \otimes v \mapsto v \otimes u$ on $U \otimes U$. In this case, $(f^*)^* = f$.}
\end{remark}

In their paper \cite{HV}, Huneke and Vraciu construct an involution $\adj$ on $\omega^*$ as follows: Let $f \in \Hom_R(\omega,R)$. Fix $u \in \omega$. Consider $\phi_{f,u} : \omega \rar \omega$ defined by $\phi_{f,u}(v) = f(v)\cdot u$. Since $\Hom_R(\omega,\omega) \simeq R$, there is an element $r_{f,u} \in R$ such that $\phi_{f,u}(v) = r_{f,u}\cdot v$.  Define $f^* :\omega \rar R$ by $f^*(u) = r_{f,u}$. We can now define $\adj: \omega^* \lrar \omega^*$ as $\adj(f) = f^*$. One can see that $f^* \in \omega^*$ and that $f^*(u)(v) = f(v)(u)$ for all $u$, $v \in \omega$. Moreover $\adj$ is an involution on $\omega^*$ since $(f^*)^* = f$. 

This involution is the same as the one described in Remark \ref{R} with $U = V = W = \omega$. Note that in this case, $\Hom(\omega,\Hom(\omega,\omega)) \simeq \omega^*$.

The following remarks follow immediately from the definition of $\adj$.

\begin{remark}\label{R2}\hfill{}\\
{\rm
1) $\ker(f) = (0 :_\omega f^*(\omega))$; $f^*(\omega) = (0 :_R \ker(f))$ and vice versa.

\noindent
2) Since $\omega$ is a faithful $R$-module, we see that $f = f^*$ if and only if $f(x)y = f(y)x$ for all $x$, $y \in \omega$, i.e., $f$ satisfies Teter's condition. Thus it follows from (1) that when $f = f^*$, $\ker(f) = 0 :_\omega f(\omega)$ and $f(\omega) = 0:_R \ker(f)$, i.e., $f(\omega)$ is a self-dual ideal in $R$.

\noindent
3) As in the proof of Corollary \ref{C0.25}, $\lambda(\ker(f)) = \lambda(R/f(\omega)) = \lambda(0:_\omega f(\omega))$ by duality. Therefore, if $f(\omega) \cdot \ker(f) = 0$, then $\ker(f) = (0 :_\omega f(\omega)) = \ker(f^*)$. Thus we see that
$\ker(f)\cdot f(\omega) = 0 \Leftrightarrow  \ker(f) = (0 :_\omega f(\omega)) = \ker(f^*) \Leftrightarrow f(\omega) = f^* (\omega).$

In particular, the above equivalent conditions follow from the commutativity condition $f(x)y = f(y)x$ for all $x$, $y \in \omega$ (or equivalently $f = f^*$).
}\end{remark}

\begin{definition}{\rm
Let $(R,\m,\sk)$ be a commutative Noetherian ring and $\ma$ an ideal in $R$. We say that a subring $T$ of $R$ is an algebra retract of $R$ with respect to $\ma$ if the map $\pi \circ i : T \lrar R/\ma$ is an isomorphism, where $i: T \lrar R$ is the inclusion and $\pi: R \lrar R/\ma$ is the natural projection.
}\end{definition}

\begin{remark}\label{R1.5}{\rm
Let $R$, $\ma$ and $T$ be as in the above definition. The condition that $\pi \circ i$ is an isomorphism forces $R = i(T) \oplus \ma$. Identifying $T$ with $i(T)$, we see that $R = T \oplus \ma$ as a $T$-module.}
\end{remark}

\begin{remark}\label{R3}{\rm 
Let $(R,\m,\sk)$ be an Artinian local ring such that $2$ is invertible in $R$. Let $M$ be a finitely generated $R$-module and $\ma$ an ideal in $R$ such that there is a surjective map $f : M \onto \ma$ with $\ma(\ker f) = 0$. Since $f(x) y - f(y)x \in \ker(f)$, for any $w \in M$, $f(w)[f(x)y - f(y)x] = 0$. Thus $$f(w)f(x)y = f(w)f(y)x\text{ for all }w, x, y \in M. \quad\quad\quad(\sharp)$$ 

One can define a multiplicative structure on $M$ as follows: For $x$, $y \in M$, define $x * y = (f(x)y + f(y)x)/2$. This multiplication is associative by ($\sharp$). Thus $M$ is a ring (without a unit) with multiplication induced by $f$.

Further, if $T$ is an algebra retract of $R$ with respect to $\ma$, then one can put a ring structure on $S := T \oplus M$, with addition defined componentwise and multiplication defined as follows: For $(s,x)$, $(t,y)$ in $S$,$$(s,x)(t,y) = (st, sx + ty + x*y) = \left(st, sx + ty + \frac{f(y)x + f(x)y}{2}\right).$$
Note that $S$ is the algebra obtained by attaching a unit to the $T$-algebra $M$ with multiplication induced by $f$.  The ring $S$ is a commutative ring. Moreover, $S$ is an Artinian local ring with maximal ideal $\m_T \oplus M$, where $\m_T = \m \cap T$.
}\end{remark}

The following proposition plays a key role in our proof of Theorem \ref{T:1} and in a corollary (Corollary \ref{C1}) of the main theorem (Theorem \ref{T1}).

\begin{proposition}\label{R4}
Let $(R,\m,\sk)$ be an Artinian local ring with canonical module $\omega$. Let $f \in \omega^*$ be such that $\ker(f) = (0 :_\omega \ma)$ where $\ma := f(\omega)$. Assume that $2$ is invertible in $R$. Then there is a map $h:\omega \lrar R$ satisfying:\vone

\noindent
{\rm 1)} $h(x)y = h(y)x$ for all $x$, $y \in \omega$, i.e., $h$ satisfies Teter's condition.\\
{\rm 2)} $\ker(h) \cap \ma \cdot \omega \inc \ker(f)$. \\
{\rm 3)} $\ker(f) \inc \ker(h) \inc (0 :_\omega {\ma}^2)$, i.e., $\ma^2 \inc h(\omega) \inc \ma$.\\
{\rm 4)} If $(0:_R \ma) \inc \ma^2$, then $\ker(f) = \ker(h)$ {\rm (}or equivalently $h(\omega) = \ma${\rm )}.
\end{proposition}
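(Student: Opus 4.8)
The plan is to symmetrize $f$ by means of the involution $\adj$ on $\omega^*$. Since $2$ is invertible in $R$, set $h := \tfrac12\bigl(f + \adj(f)\bigr) \in \omega^*$, and abbreviate $f^* = \adj(f)$. Because $\adj$ is $R$-linear — this is clear from the $\Hom$--tensor description in Remark~\ref{R}, or directly from $(rf+g)^*(u) = r f^*(u) + g^*(u)$ — and $(f^*)^* = f$, one gets $h^* = \tfrac12(f^* + f) = h$. By Remark~\ref{R2}.2 the equality $h = h^*$ is precisely Teter's condition, so (1) holds; and by Remark~\ref{R2}.1 we also have $\ker(h) = (0 :_\omega h(\omega))$ and $h(\omega) = (0 :_R \ker(h))$.

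First I would record two structural facts. Since $\ker(f) = (0 :_\omega \ma)$ with $\ma = f(\omega)$, Remark~\ref{R2}.3 gives $f^*(\omega) = f(\omega) = \ma$ and $\ker(f^*) = \ker(f)$. Next, the defining relation for $f^*$ reads $f(v)\,u = f^*(u)\,v$ in $\omega$ for all $u,v \in \omega$; applying the $R$-linear map $f$ to both sides gives $f(v)f(u) = f^*(u)f(v)$, i.e. $f(u) - f^*(u) \in \ann_R(f(\omega)) = (0 :_R \ma)$ for every $u \in \omega$. Hence $h(u) - f(u) = \tfrac12\bigl(f^*(u) - f(u)\bigr) \in (0 :_R \ma)$ as well, so $a\,h(u) = a\,f(u)$ for all $a \in \ma$ and $u \in \omega$. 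Two consequences: $h$ and $f$ restrict to the same map on $\ma\omega$, and $\ma\cdot h(\omega) = \ma\cdot f(\omega) = \ma^2$.

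Now (2)--(4) drop out. (2): if $x \in \ker(h) \cap \ma\omega$ then $f(x) = h(x) = 0$ because $h$ and $f$ agree on $\ma\omega$, so $x \in \ker(f)$. (3): $h(\omega) \subseteq f(\omega) + f^*(\omega) = \ma$, and $\ma^2 = \ma\cdot h(\omega) \subseteq h(\omega)$ since $h(\omega)$ is an ideal; applying the order-reversing bijection $I \mapsto (0 :_\omega I)$ given by Matlis duality (and using $\ker(f) = (0:_\omega\ma)$ together with $\ker(h) = (0:_\omega h(\omega))$) turns the chain $\ma^2 \subseteq h(\omega) \subseteq \ma$ into $\ker(f) \subseteq \ker(h) \subseteq (0 :_\omega \ma^2)$. (4): suppose $(0 :_R \ma) \subseteq \ma^2$; then for each $u$, $f(u) = h(u) + \bigl(f(u) - h(u)\bigr)$ with $h(u) \in h(\omega)$ and $f(u) - h(u) \in (0 :_R \ma) \subseteq \ma^2 \subseteq h(\omega)$, so $f(u) \in h(\omega)$; thus $\ma = f(\omega) \subseteq h(\omega) \subseteq \ma$, giving $h(\omega) = \ma$ and hence $\ker(h) = (0:_\omega \ma) = \ker(f)$.

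The steps that are merely bookkeeping are the $R$-linearity of $\adj$ and the translations through the duality $I \leftrightarrow (0 :_\omega I)$. The genuinely load-bearing point — and the only place where the hypothesis $\ker(f) = (0 :_\omega \ma)$ and the invertibility of $2$ are essential — is that $f - f^*$ takes values in $(0 :_R \ma)$ while $f$ and $f^*$ have the same image $\ma$: this is exactly what pins $h(\omega)$ between $\ma^2$ and $\ma$ and makes $h$ agree with $f$ on $\ma\omega$. I expect the main conceptual hurdle to be recognizing that the symmetrization $h = \tfrac12(f + f^*)$ is the right object at all; once that is in hand the rest is formal, whereas over a ring in which $2$ is a zero-divisor one would be forced back onto Teter's more intricate argument.
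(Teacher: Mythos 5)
Your proof is correct, and it takes a genuinely different (and, I'd say, cleaner) route to (2)--(4) than the paper does. The paper, having set $h = f + f^*$, proves (2) by an element chase: it writes a general element of $\ker(h)\cap\ma\omega$ as $\sum f(x_i)y_i$, applies $h$, pairs against an arbitrary $w\in\omega$, and invokes the identity $f(w)f(x)y = f(w)f(y)x$ from Remark~\ref{R3} to conclude $2\sum f(x_i)f(y_i)w = 0$, whence invertibility of $2$ and faithfulness of $\omega$ finish. It then derives the second inclusion of (3) from (2) via $\ma\cdot\ker(h)\subseteq\ker(f)$ and a colon computation, and (4) by combining (2), (3), and the containment $(0:_\omega\ma^2)\subseteq\ma\omega$. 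You instead extract a single structural fact up front: applying $f$ to the defining identity $f(v)\,u = f^*(u)\,v$ yields $f(v)\bigl(f(u)-f^*(u)\bigr)=0$ for all $u,v$, so $f - f^*$ maps $\omega$ into $(0:_R\ma)$; combined with $f^*(\omega)=f(\omega)=\ma$ (Remark~\ref{R2}, which is where the hypothesis $\ker(f)=(0:_\omega\ma)$ enters), this pins down both $h|_{\ma\omega}=f|_{\ma\omega}$ and $\ma\,h(\omega)=\ma^2$, from which (2), (3), (4) drop out by applying the order-reversing correspondence $I\mapsto(0:_\omega I)$. What your approach buys is transparency: the one observation "$f-f^*$ has image in $(0:_R\ma)$" replaces the paper's several interlocking element computations, and it makes visible exactly why the symmetrization sits squarely between $\ma^2$ and $\ma$. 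What the paper's version buys is self-containment at the level of Remark~\ref{R3}'s multiplication identity, which the author wants on hand anyway for the ring structure on $S = T\oplus\omega$ in Theorem~\ref{T:1}. Either way, the core identity $f(v)u = f^*(u)v$ is doing the real work; you have just packaged it more efficiently.
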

 
\begin{proof}
Define $h = f + f^*$, where $f^*$ is defined as in Remark \ref{R}. Thus $h = h^*$, i.e., $h$ satisfies Teter's condition. By Remark \ref{R2}.3, this implies that $\ker(h) =( 0:_\omega h(\omega))$. 

We see that by definition of $h$, $\ker(f) \cap \ker(f^*) \inc \ker(h)$. But by Remark \ref{R2}.3 (and the assumption that $\ker(f)\cdot f(\omega) = 0$), $\ker(f) = \ker(f^*)$. Hence $\ker(f) \inc \ker(h)$ giving the first inclusion in (3). The other inclusion in (3) is a consequence of (1) and (2) which can be seen as follows: By (2), $\ma \cdot \ker(h) \inc \ker(f)$. Thus $\ker(h) \inc (\ker(f) :_\omega \ma)$ which gives us $\ker(h) \inc (0 :_\omega \ma^2)$ since $\ker(f) = (0 :_\omega \ma)$ by assumption. The ``i.e.'' part of (3) follows by duality. 

Since $(0:_R \ma) = (0 :_R (\ma \omega))$, $(0 :_R \ma) \inc \ma^2$ gives $(0 :_\omega \ma^2) \inc \ma \omega$. Hence by (2) and (3), $\ker(h) \inc \ker(f)$ proving (4).

In order to prove (2), consider $x_i$, $y_i \in \omega$ such that $\sum f(x_i)y_i \in \ker(h) \cap \ma\omega$. We want to show that $\sum f(x_i)y_i \in \ker(f)$, i.e., $\sum f(x_i)f(y_i) = 0$. Since $\sum f(x_i)y_i \in \ker(h)$, we have $0 = h(\sum f(x_i)y_i) = \sum f(x_i)[f(y_i) + f^*(y_i)] $. Thus, for every $w \in \omega$, $0 = \sum f(x_i)[f(y_i) + f^*(y_i)]w = \sum f(x_i)[f(y_i)w + f(w)y_i] $ and hence by Remark \ref{R3} with $M = \omega$, $2 \sum f(x_i)f(y_i) w = 0$. Since $2$ is invertible in $R$ and $\omega$ is a faithful $R$-module, this forces $\sum f(x_i)f(y_i) = 0$.
\end{proof}

\begin{theorem}\label{T:1}
Let $(R,\m,\sk)$ be an Artinian local ring with canonical module $\omega$. Let $\ma$ be a self-dual ideal in $R$ such that $(0:_R\ma) \subset \ma^2$ and $T$ be an algebra retract of $R$ with respect to $\ma$. Assume further that $2$ is invertible in $R$. Then $g(R) \leq \lambda(R/\ma)$.
\end{theorem}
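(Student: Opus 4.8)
The plan is to manufacture a Gorenstein Artin local ring $S$ mapping onto $R$ with $\lambda(S) - \lambda(R) = \lambda(R/\ma)$; this yields $g(R) \le \lambda(R/\ma)$ at once. We may assume $\ma \subsetneq R$, since $\ma = R$ self-dual forces $R \simeq \omega$, i.e. $R$ Gorenstein and $g(R) = 0 = \lambda(R/\ma)$; in particular $\ma \subseteq \m$. First I would produce a well-behaved surjection $\omega \onto \ma$: since $\ma \simeq \ma^\vee$, Lemma~\ref{L1} gives a surjection $f \colon \omega \onto \ma$ with $\ker f = (0:_\omega\ma)$, so $\ma = f(\omega)$ and $f(\omega)\cdot\ker f = 0$. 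Applying Proposition~\ref{R4} to this $f$ (its hypotheses hold, $2$ being invertible) yields $h \colon \omega \to R$ satisfying Teter's condition with $\ker f \subseteq \ker h \subseteq (0:_\omega\ma^2)$; the hypothesis $(0:_R\ma)\subseteq\ma^2$ then lets me invoke part~(4) of that proposition to get $\ker h = \ker f = (0:_\omega\ma)$, whence also $h(\omega) = \ma$. So $h$ is a surjection $\omega \onto \ma$ with $\ma\cdot\ker h = 0$ satisfying Teter's condition.

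Next I would assemble $S$ and the map onto $R$. Since $T$ is an algebra retract of $R$ with respect to $\ma$, Remark~\ref{R3}, applied with $M = \omega$ and with $h$ in place of $f$, makes $S := T \oplus \omega$ a commutative Artinian local ring with maximal ideal $\m_S = \m_T \oplus \omega$, where $\m_T = \m \cap T$. Define $\psi \colon S \to R$ by $\psi(t,y) = t + h(y)$, identifying $T$ with its image in $R$. Using that $h$ is $R$-linear, that $h(x*y) = h(x)h(y)$, and commutativity of $R$, one checks $\psi$ is a ring homomorphism, and it is surjective because $R = T \oplus \ma$ (Remark~\ref{R1.5}) and $h(\omega) = \ma$. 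From the directness of $R = T \oplus \ma$ one reads off $\ker\psi = \{0\}\oplus\ker h = \{0\}\oplus(0:_\omega\ma)$, so the exact sequence $0 \to \ker\psi \to S \to R \to 0$ gives
\[
\lambda(S) - \lambda(R) = \lambda\big((0:_\omega\ma)\big) = \lambda\big((R/\ma)^\vee\big) = \lambda(R/\ma).
\]

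The crux is to show $S$ is Gorenstein, equivalently that $\soc(S) = (0:_S\m_S)$ is one-dimensional over $\sk$. Writing out $(t_0,y_0)\cdot\m_S = 0$ using the multiplication of Remark~\ref{R3} and Teter's condition on $h$ (so that $y_0*z = h(y_0)z$), testing the $\omega$-component against elements $(0,z)$ forces $(t_0 + h(y_0))\cdot\omega = 0$; since $\omega$ is faithful and $R = T \oplus \ma$ is direct, this gives $t_0 = 0$ and $h(y_0) = 0$, i.e. $y_0 \in \ker h = (0:_\omega\ma)$. Testing against $(a,0)$ then gives $y_0 \in (0:_\omega\m_T)$, and the reverse inclusion is immediate, so
\[
\soc(S) = \{0\}\oplus\big((0:_\omega\ma)\cap(0:_\omega\m_T)\big) = \{0\}\oplus\big(0:_\omega(\ma+\m_T)\big) = \{0\}\oplus(0:_\omega\m) = \{0\}\oplus\soc(\omega),
\]
using $\ker h = (0:_\omega\ma)$ and $\ma + \m_T = \m$. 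As $\omega$ is the injective hull of $\sk$ over $R$, its socle is simple, so $\soc(S) \cong \sk$; thus $S$ is a Gorenstein Artin local ring mapping onto $R$ with $\lambda(S) - \lambda(R) = \lambda(R/\ma)$, and $g(R) \le \lambda(R/\ma)$.

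The main obstacle is this last step: the Gorenstein property is precisely the assertion that the socle of $S$ does not grow past $\soc(\omega)$, and it rests both on having replaced $f$ by the Teter map $h$ of Proposition~\ref{R4} \emph{and} on the hypothesis $(0:_R\ma)\subseteq\ma^2$, which is what upgrades $\ker h \subseteq (0:_\omega\ma^2)$ to $\ker h = (0:_\omega\ma)$. Along the way I would also pin down the normalization in the multiplication of Remark~\ref{R3} (so that $(1,0)$ is the unit of $S$) before verifying multiplicativity of $\psi$.
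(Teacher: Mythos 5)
Your proof is correct and takes essentially the same approach as the paper: both obtain $f\colon\omega\onto\ma$ with $\ker f=(0:_\omega\ma)$ from Lemma~\ref{L1}, build $S=T\oplus\omega$ with the ring structure of Remark~\ref{R3}, and show $S$ is Gorenstein by a socle computation using Proposition~\ref{R4}.4. The only (harmless, arguably cleaner) deviation is that you symmetrize first, replacing $f$ by $h=f+f^*$ before forming $S$, whereas the paper builds $S$ with $f$ itself and only invokes $h$ inside the socle argument; since $2$ is invertible the map $(t,x)\mapsto(t,2x)$ identifies the two rings, so the constructions coincide.
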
 

\begin{remark}{\rm
When $\ma = \m$, the above hypothesis says that $R$ contains $\sk$ and that $\soc(R) \inc \m^2$. Huneke and Vraciu prove the theorem in this case in \cite{HV}.}
\end{remark}

\noindent
{\it Proof of Theorem} \ref{T:1}. Note that since $\ma$ is self-dual, there is a surjective map $f :\omega \onto \ma$ such that $\ker(f) = (0 :_\omega \ma)$ by Lemma \ref{L1}. We prove the theorem by constructing a Gorenstein Artin local ring $S$ mapping onto $R$ such that $\lambda(S) - \lambda(R) = \lambda(R/\ma)$.

Set $S := T \oplus \omega$. Then $S$ is an Artinian local ring with operations as in Remark \ref{R3}. Define $\phi : S \lrar R$ as $\phi (t,x) = t + f(x)$. Then $\phi$ is a ring homomorphism and it follows from Remark \ref{R1.5} that $\phi$ is surjective. 

We now claim that $S$ is Gorenstein. It is enough to prove that $\lambda(\soc(S)) = 1$. We prove this by showing that $\soc(S) \inc (0:_{\omega}\m)$ which is a one dimensional vector space over $\sk$. 
 
Let $(t,x) \in \soc(S)$ for some $t \in T$ and $x \in \omega$. For each $y \in \omega$, we have $2ty + f(x)y + f(y)x = 0$. Letting $y$ vary over $\ker(f)$, we see that $(2t - f(x)) \in (0:_R \ker(f)) = \ma$. Thus $t \in \ma$ which implies that $t = 0$ by Remark \ref{R1.5}. Now $(0,x)(0,y) = 0$ for all $y \in \omega$ gives $f(x)y + f(y)x = 0$. Thus, if $h: \omega \lrar R$ is defined as in Proposition \ref{R4}, then $h(x)y = 0$ for all $y \in \omega$. Since $\omega$ is a faithful $R$-module, this implies that $x \in \ker(h)$. Therefore, by Proposition \ref{R4}.4, the hypothesis $(0:_R\ma) \subset \ma^2$ gives $x \in \ker(f)$.  

Let $m \in \m$. By Remark \ref{R1.5}, we can write $m = t + a$ for some $t \in \m \cap T$ and $a \in \ma$. Since $x \in \ker(f) = (0:_{\omega} \ma)$, $a \cdot x = 0$. Moreover, since $(0,x) \in \soc(S)$, $(0,x)(t,0) = 0$ gives $t \cdot x = 0$.  Thus $m \cdot x = 0$ for all $m \in \m$ proving the theorem.
$\hfill{\square}$

\section{\large The Main Theorem}

\noindent
{\bf Notation:} We use the following notation in the proof of Theorem \ref{T1}:\\
Let $R$ be any ring and $M$ and $N$ be two $R$-modules. Let $m_i \in M$ and $n_i \in N$ for $1 \leq i \leq n$. We use the notation $(\vec{m}{1}{n}) \overset{\bullet}\otimes (\vec{n}{1}{n})$ to denote $\Sigma(m_i \otimes n_i)$. 

\begin{theorem}\label{T1}
With notation as in Setup {\rm \ref{S1}}, let $\ma$ be an ideal in $T$, $\md \inc \ma$ an ideal generated by a system of parameters such that \\
{\rm a)} there is an injective map $\overline{\md} \overset{\phi}\into \overline{\ma}^\vee$ satisfying $\phi(\overline{x})(\overline{y}) = \phi(\overline{y})(\overline{x})$ for all $x , y \in \md$,\\
{\rm b)} $\mb \inc \ma \md$ and \\
{\rm c)} $(\mb :_T \ma) \inc \md$.\\
Then there is a Gorenstein Artin ring $S$ mapping onto $R$ such that $\lambda(S) - \lambda(R) = \lambda(R/\overline{\ma})$, i.e., $g(R) \leq \lambda(R/\overline \ma)$.
\end{theorem}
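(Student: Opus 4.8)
\textbf{Proof proposal for Theorem \ref{T1}.}

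The plan is to generalize the construction used in the proof of Theorem \ref{T:1}, replacing the pair $(\omega, f)$ by a pair built from the ideal $\md$ and the map $\phi$. Concretely, I would work downstairs in the regular local ring $T$ and build $S$ as an idealization-type extension $S := T/\md \oplus M$, where $M$ is chosen so that its associated bilinear form (coming from $\phi$) makes $S$ Gorenstein, and so that $S$ surjects onto $R = T/\mb$ with kernel of the right length. The natural candidate for the module is $M := \overline{\md}$ itself, viewed as a module over the retract-like ring $T/\md$ (note $\md$ is generated by a system of parameters, so $T/\md$ is Gorenstein Artinian), with multiplication $x * y$ induced by $\phi$ via $\phi(x)(y)$, exactly as in Remark \ref{R3}. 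The commutativity hypothesis (a) is precisely Teter's condition, which is what guarantees this multiplication is well-defined and symmetric; one then checks associativity as in the ($\sharp$)-computation of Remark \ref{R3}.

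Next I would define the surjection $\psi : S \onto R$. Since $\md \inc \ma$ and $\mb \inc \ma\md$, there is a natural way to send $(t \bmod \md,\, x) \mapsto t + (\text{image of }x)$ into $T/\mb = R$; hypothesis (b), $\mb \inc \ma\md \inc \md$, ensures the first coordinate is well-defined modulo $\mb$, and the image of $M = \overline{\md}$ in $R$ is the ideal $\overline{\md} \cdot(\text{something})$ — here I would use $\phi$ and the identification $\overline{\ma}^\vee$ with a submodule of $\omega_R$ (Remark \ref{R0.75}, Lemma \ref{L1}) to land the second coordinate inside $R$ compatibly with the ring structure. The point of hypothesis (c), $(\mb :_T \ma) \inc \md$, is to pin down the kernel: it forces the length computation $\lambda(S) - \lambda(R) = \lambda(R/\overline{\ma})$ by controlling which elements of $M$ die in $R$, analogous to how $(0:_R\ma)\inc\ma^2$ was used in Proposition \ref{R4}.4 and in the socle computation in the proof of Theorem \ref{T:1}. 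I would set up the exact sequence relating $\lambda(S)$, $\lambda(T/\md)$, $\lambda(M)$, $\lambda(R)$ and $\lambda(R/\overline\ma)$ and verify the equality by a diagram chase, using $\lambda(\overline\ma^\vee) = \lambda(\overline\ma)$ and $\lambda(\overline\md) = \lambda(R/\overline\ma) + \lambda(\overline\ma) - \lambda(\ker\phi)$-type identities together with the injectivity of $\phi$.

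The heart of the argument, and the step I expect to be the main obstacle, is proving $S$ is Gorenstein, i.e. $\lambda(\soc S) = 1$. Following the template of the proof of Theorem \ref{T:1}, I would take $(\bar t, x) \in \soc(S)$ and show first that $\bar t = 0$ (using that pairing against $\ker$ of the relevant map lands $2t$, hence $t$, inside $\ma$, and $t \in \md \cap \ma$ combined with how $T/\md$ sits inside $S$ forces $t \equiv 0$), and then that $x$ is annihilated by the whole maximal ideal, so $x$ lies in the one-dimensional socle of $\omega_{T/\md}$. This second part is where hypotheses (a) and (c) must be combined with the $2$-invertibility (implicit — I would expect Theorem \ref{T1} to inherit the standing assumption that $2$ is a unit, as in Theorem \ref{T2}); the symmetrized multiplication and the argument of Proposition \ref{R4}.2–4 should show that $(0,x)$ killing all of $M$ forces $x \in \ker\phi$, after which $(0,x)$ killing the $T$-part forces $x$ into the socle. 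The delicate point is that here $\phi$ is only assumed \emph{injective}, not an isomorphism, so I cannot directly invoke Lemma \ref{L1}; instead I would pass to the ideal $\overline{\md'} := \phi(\overline\md)^{\vee\vee}$ or argue that $\phi$ induces an isomorphism onto its image $\overline{\md} \simeq \phi(\overline\md) \inc \overline\ma^\vee$ and that this image, together with hypothesis (c), behaves like a self-dual ideal for the purposes of the socle computation. Making that reduction precise — reconciling the "injective $\phi$" of the hypothesis with the "isomorphism $f$" needed to run the Proposition \ref{R4} machinery — is the technical crux, and I would handle it by carefully tracking $(\mb:_T\ma)$ through the duality, which is exactly what condition (c) is engineered to make work.
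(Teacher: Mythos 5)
Your proposal diverges from the paper's actual argument in a way that does not just change the route but breaks the construction, and in one place you quietly strengthen the hypotheses.

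First, the idealization-style ring you propose, $S := T/\md \oplus M$ with $M = \overline{\md}$, cannot have the right length. Since $\mb \inc \ma\md \inc \md$, we have $\lambda(S) = \lambda(T/\md) + \lambda(\md/\mb) = \lambda(T/\mb) = \lambda(R)$, whereas the theorem requires $\lambda(S) - \lambda(R) = \lambda(R/\overline{\ma}) > 0$. Relatedly, because $\mb \inc \md$, the ring $T/\md$ is a \emph{quotient} of $R$, not a subring, so it is not an algebra retract of $R$ and the template of Theorem \ref{T:1} (which requires $T \into R$ splitting $R \onto R/\ma$) does not transfer; there is no natural ring map $T/\md \oplus M \lrar R$ of the form $(\bar t, x) \mapsto t + f(x)$. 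This is not a gap you can patch by choosing a different $M$: the whole point of the paper's Theorem \ref{T1} is that $S$ is built as a quotient of the ambient regular ring $T$, not as an extension of a retract.

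Second, you assume $2$ is invertible, saying you expect it to be a standing hypothesis. It is not, and the paper's proof never uses it. Theorem \ref{T1} is the Teter-style statement: condition (a), the commutativity $\phi(\bar x)(\bar y) = \phi(\bar y)(\bar x)$, is exactly what takes the place of $2$-invertibility, just as in Teter's original theorem (recovered in Corollary \ref{C2} with no assumption on characteristic). Indeed the symmetrization trick of Proposition \ref{R4} is only invoked later, in Corollary \ref{C1}, precisely because \emph{that} corollary drops the Teter condition and \emph{does} assume $2$ invertible. Your plan to ``reconcile the injective $\phi$ with the isomorphism $f$ needed to run the Proposition \ref{R4} machinery'' is therefore aimed at the wrong target: the proof of Theorem \ref{T1} never needs $\phi$ to be an isomorphism and never needs $f = f^*$ from Proposition \ref{R4}.

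What the paper actually does: it passes via Hom--tensor adjointness to $\tilde\phi \in \Hom_R(\overline{\md}\otimes\overline{\ma},\omega)$ and shows, using the Teter condition together with the Koszul relations on the regular sequence generating $\md$, that $\tilde\phi$ factors through the natural surjection $\md/\mb \otimes \ma/\mb \onto \md\ma/\ma\mb$ to give $\widehat\phi : \md\ma/\ma\mb \to \omega$. Restricting $\widehat\phi$ to $\mb/\ma\mb$ and taking the kernel produces an ideal $\mc$ with $\ma\mb \inc \mc \inc \mb$; condition (c) and injectivity of $\phi$ give $\mb = (\mc:_T\ma)$, from which one reads off that $\soc(T/\mc)$ is one-dimensional, so $S := T/\mc$ is Gorenstein, and then Lemma \ref{L2} together with $\mb^2 \inc \mc$ gives the exact length equality. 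The regular-sequence/Koszul step (showing $\ker\pi$ is generated by antisymmetrizers) is the technical heart, and it has no analogue in your sketch.
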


\begin{proof}
The map $\phi \in \Hom_R(\overline{\md}, \Hom_R(\overline{\ma},\omega))$ gives a map 
$\tilde{\phi} \in \Hom_R(\overline{\md} \otimes_R \overline{\ma}, \omega)$ defined by $\tilde{\phi}(\overline{x} 
\otimes \overline{y}) = \phi(\overline{x})(\overline{y})$ for any $x \in \md, y \in \ma$, by 
the $\Hom$-$\otimes$ adjointness. The hypothesis implies that $\tilde{\phi}(\overline{x} \otimes \overline{y}) = \tilde{\phi}(\overline{y} \otimes \overline{x})\text{ for }x,y \in \md$. We have a natural map $\pi: \md/\mb \otimes \ma/\mb \lrar  \md\ma/\ma \mb$ defined by $\pi((x + \mb) \otimes (y + \mb)) =  (xy + \ma \mb)$

\noindent 
We claim that:\\
(1) $\tilde{\phi}$ factors through $\md\ma/\ma \mb$, i.e., there is a map $\widehat{\phi}: \md\ma/\ma \mb \rar \omega$ such that $\tilde{\phi} = \widehat{\phi} \circ \pi$,\\
(2) $\mb = (\mc :_T \ma)$, where $\Ker(\widehat{\phi}|_{(\mb/\ma \mb)}) =: \mc/\ma \mb$,\\
(3) $S := T/\mc$ is Gorenstein and\\
(4) $\lambda(S) - \lambda(R) = \lambda(T/\ma)$.

In order to prove (1), it is enough to prove that $\Ker(\pi)$ is generated by elements 
in $\md/\mb \otimes \ma/\mb$ of the form $(x + \mb) \otimes (y + \mb) - (y + \mb) \otimes 
(x + \mb),$ for $x, y \in \md$. In such a case $\tilde{\phi}$ restricts to $\widehat{\phi}$. 

Let $\md$ be minimally generated by the regular sequence \vec{x}{1}{n}. Let $\Sigma(\bar{k_i} \otimes \bar{a_i})$ be an element of $\Ker(\pi)$, where $\bar{x}$ denotes $x + \mb$. Since $k_i \in \md$, without loss of generality we may assume that $\Sigma(\bar{k_i} \otimes \bar{a_i}) = \Sigma_{i=1}^n(\bar{x_i} \otimes \bar{a_i}) \in \Ker(\pi)$. Thus $\pi(\Sigma_{i=1}^n(\bar{x_i} \otimes \bar{a_i})) = \Sigma\overline{x_ia_i} = 0$ in $\md\ma / \ma \mb$. Hence $\Sigma_{i=1}^n x_ia_i \in \ma \mb$. Since $\mb \inc \md\ma$, there are elements $u_{ij},v_j \in \ma$ such that $\Sigma_{i=1}^n x_ia_i = \Sigma_{j=1}^m(\Sigma_{i=1}^n x_iu_{ij})v_j$ in $T$, where $\Sigma (x_iu_{ij}) \in \mb$. Hence $\Sigma_{i=1}^n(\Sigma_j(u_{ij}v_j) - a_i)x_i = 0$ in $T$. Since $\vec{x}{1}{n}$ is a regular sequence in $T$, we can write $$(a_1 - \Sigma_j(u_{1j}v_j), \ldots, a_n - \Sigma_j(u_{nj}v_j)) = 
\Sigma_{i < j} t_{ij} (x_j e_i - x_i e_j)\quad\quad\quad(i)$$ for some $t_{ij} \in T$, where $\{e_i\}_{i = 1}^n$  is the standard basis of $T^n$. Then we have 

$$(\vec{\overline x}{1}{n}) \overset{\bullet}\otimes  (\Sigma_j(\overline{u_{1j}v_j}), \ldots, \Sigma_j(\overline{u_{nj}v_j})) $$ $$= (\Sigma_i(\overline{u_{i1}x_i}), \ldots, \Sigma_i(\overline{u_{im}x_i})) \overset{\bullet}\otimes (\vec{\overline v}{1}{m}) = 0\quad\quad\quad(ii)$$
since $\Sigma_i(u_{ij}x_i) \in \mb$ for each $j$.
Thus, using Equations $(i)$ and $(ii)$, we see that
$$\Sigma_{i=1}^n(\bar{x_i}\otimes \bar{a_i}) = (\vec{\bar{x}}{1}{n}) \overset{\bullet}
\otimes (\vec{\bar{a}}{1}{n}) =$$
$$ (\vec{\bar{x}}{1}{n})  \overset{\bullet}\otimes \Sigma_{i < j} \bar{t_{ij}}(0, \ldots, \bar{x_j},\ldots,-\bar{x_i},\ldots, 0)= \Sigma \bar{t_{ij}} (\bar{x_i} \otimes
 \bar{x_j} - \bar{x_j} \otimes \bar{x_i})$$
verifying (1).

We now have a map $\md\ma/\ma \mb \overset{\widehat{\phi}}\rar \omega$ where $\widehat{\phi}
(\overline{\Sigma a_ib_i}) = \Sigma \phi(\overline{a_i})(\overline{b_i})$. Restrict $\widehat{\phi}$ to $\mb/\ma \mb$, call it $\psi$. Let $\mc \inc T$ be defined by $\mc/\ma \mb = \Ker(\psi)$. Then $(\mc :_T \ma) = \mb$ which can be seen as follows: Let $u \in (\mc:_T \ma)$. Note that the hypothesis $(\mb:_T \ma) \inc \md$ gives $u \in \md$. For any $a \in \ma$, we have $0 = \psi(\overline u \overline a ) = \phi(\bar{u})(\bar{a})$. 
Hence $\phi(\bar{u}) = 0$ in $\overline{\ma}^\vee$. Since $\phi$ is an injective map, $u \in \mb$ as claimed in (2). 

The map $\psi$ induces an inclusion $\mb/\mc \into \omega$. Since $\mb = (\mc :_T\ma)$, $(\mc :_T \m_T) \inc \mb$, i.e., $(\mc:_T \m_T)/\mc \simeq \soc(T/\mc) \inc \mb/\mc$. Therefore the inclusion $(\mc :_T \m_T)/\mc \into \soc(\omega)$, yields $\lambda(\soc(T/\mc)) = 1$ since $\omega$ has a one-dimensional socle. Thus $S := T/\mc$ is Gorenstein proving (3).

Lastly, since $\ma \mb \inc \mc$ and $\mb \inc \ma$, $\mb^2 \inc \mc$. Therefore $(\mb/\mc)^2 = 0$ in $S$ and $R \simeq S/(\mb/\mc)$. Now, by Proposition \ref{KO}.1, $\omega \simeq (0:_S(\mb/\mc)) \simeq \ma/\mc$
since $T/\mc$ is a Gorenstein ring. Thus the image of $\omega$ in $R$ under the natural map from $S$ to $R$ is $\ma/\mb$, and $(\mb/\mc)^2 = 0$ in $S$. Hence by Lemma \ref{L2}, $\lambda(S) - \lambda(R) = \lambda(R/\overline{\ma}) = \lambda(T/\ma)$ which proves (4) and completes the proof of the theorem.
\end{proof}

With notation as in Theorem \ref{T1}, if $\ma = \md$, then condition (c) in the above theorem follows from condition (b). Thus we have the following

\begin{corollary}\label{C2}
With notation as in Setup {\rm \ref{S1}}, let $\ma$ be an ideal in $T$ generated by a system of parameters such that $\mb \inc \ma^2$. Let $\phi: \overline{\ma} \lrar \overline{\ma}^\vee$ be an isomorphism satisfying $\phi(\overline{x})(\overline{y}) = \phi(\overline{y})(\overline{x})$ for all $x , y \in \ma$. Then $g(R) \leq \lambda(R/\overline{\ma})$.
\end{corollary}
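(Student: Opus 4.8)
The plan is to obtain this as the special case $\md = \ma$ of Theorem \ref{T1}. With this choice, hypotheses (a) and (b) of Theorem \ref{T1} are precisely the hypotheses of the corollary: the given isomorphism $\phi\colon \overline{\ma} \overset{\sim}\lrar \overline{\ma}^\vee$ is in particular an injective map $\overline{\md} \into \overline{\ma}^\vee$ satisfying Teter's condition, and $\mb \inc \ma^2 = \ma\md$. So the only thing that still needs an argument is hypothesis (c), namely $(\mb :_T \ma) \inc \md = \ma$.

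For (c), first observe that since $\mb \inc \ma^2$ we have $(\mb :_T \ma) \inc (\ma^2 :_T \ma)$, so it suffices to show $(\ma^2 :_T \ma) = \ma$. Here I would use that $\ma$ is generated by a system of parameters of $T$; since $T$ is regular, hence Cohen--Macaulay, such a system of parameters is a regular sequence. For an ideal $\ma$ generated by a regular sequence the associated graded ring $\mathrm{gr}_{\ma}(T)$ is a polynomial ring over $T/\ma$, and in particular $\ma/\ma^2$ is free over $T/\ma$ on the images of the generators; comparing leading forms shows at once that if $u x_1 \in \ma^2$ for a generator $x_1$ of $\ma$ then $u \in \ma$, i.e. $(\ma^2 :_T \ma) = \ma$. (This is exactly the remark recorded just before the corollary: when $\ma = \md$, condition (c) is forced by condition (b).) Thus the inclusion $(\mb :_T \ma) \inc \ma$ holds, and all three hypotheses of Theorem \ref{T1} are satisfied with $\md = \ma$.

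Applying Theorem \ref{T1} then yields a Gorenstein Artin ring $S$ mapping onto $R$ with $\lambda(S) - \lambda(R) = \lambda(R/\overline{\ma})$, and hence $g(R) \leq \lambda(R/\overline{\ma})$, as required. I do not expect a genuine obstacle in this argument: the real content is in Theorem \ref{T1}, and the only additional point is the elementary colon identity $(\ma^2 :_T \ma) = \ma$ for a parameter ideal in a regular local ring, which is routine. The one place to be slightly careful is to record explicitly why a system of parameters of $T$ is a regular sequence (Cohen--Macaulayness of the regular ring $T$), since that is what makes the colon computation valid.
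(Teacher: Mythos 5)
Your proposal is correct and is essentially the paper's own proof: the paper also derives the corollary by taking $\md = \ma$ in Theorem \ref{T1}, merely stating (without the details you supply) that condition (c) follows from condition (b) in this case. Your justification of the colon identity $(\ma^2 :_T \ma) = \ma$ via the freeness of $\ma/\ma^2$ over $T/\ma$ for a parameter ideal in the Cohen--Macaulay ring $T$ is correct and is the expected argument.
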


We recover Teter's theorem from Corollary \ref{C2} by taking $\ma = \m$. With some additional hypothesis, we see in the next corollary that we can get rid of Teter's condition on the map $\phi$ in Theorem \ref{T1}, just as Huneke and Vraciu did in the case of Teter's theorem.

\begin{corollary}\label{C1}
With notation as in Setup {\rm \ref{S1}}, let $\ma$ be an ideal in $T$, $\md \inc \ma$ an ideal generated by a system of parameters such that \\
{\rm a)} $\overline \ma \simeq \overline\ma^\vee$.\\
{\rm b)} $\mb \inc \ma \md$ and \\
{\rm c)} $(\mb :_T \ma) \inc \md \cap \ma^2$.\\
Further assume that $2$ is invertible in $R$. Then $g(R) \leq \lambda(R/\overline{\ma})$.
\end{corollary}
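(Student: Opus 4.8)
\emph{Plan.} The plan is to deduce this from Theorem \ref{T1}: I will upgrade the bare self-duality isomorphism of hypothesis (a) to an isomorphism $\overline{\ma}\simeq\overline{\ma}^\vee$ that actually satisfies Teter's condition, and then restrict it to $\overline{\md}$. Conditions (b) and (c) of Theorem \ref{T1} are immediate from conditions (b) and (c) of the present statement (since $\md\cap\ma^2\inc\md$), so the entire content lies in producing the required map $\overline{\md}\into\overline{\ma}^\vee$.

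First, by Lemma \ref{L1} the isomorphism $\overline{\ma}\simeq\overline{\ma}^\vee$ of hypothesis (a) provides a surjection $f:\omega\onto\overline{\ma}$ with $\ker(f) = (0:_\omega\overline{\ma})$; composing with the inclusion $\overline{\ma}\into R$, I regard $f$ as an element of $\omega^*$ with $f(\omega) = \overline{\ma}$. Next I would record the key translation supplied by hypothesis (c): working modulo $\mb$ one has $(0:_R\overline{\ma}) = (\mb:_T\ma)/\mb$, and since $\mb\inc\ma\md\inc\ma^2$ one has $\overline{\ma}^2 = \ma^2/\mb$; hence $(\mb:_T\ma)\inc\md\cap\ma^2$ forces $(0:_R\overline{\ma})\inc\overline{\ma}^2$.

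Now I apply Proposition \ref{R4} to $f$, which is legitimate because $2$ is invertible in $R$ and $\ker(f) = (0:_\omega f(\omega))$ with $\ma = f(\omega) = \overline{\ma}$. It gives a map $h:\omega\lrar R$ satisfying Teter's condition with $\ker(f)\inc\ker(h)$; and by part (4) of that proposition, the inclusion $(0:_R\overline{\ma})\inc\overline{\ma}^2$ established above yields $\ker(h) = \ker(f)$, equivalently $h(\omega) = \overline{\ma}$. Thus $h:\omega\onto\overline{\ma}$ is a surjection with $\ker(h) = (0:_\omega\overline{\ma})$ satisfying Teter's condition, so by the ``moreover'' clause of Lemma \ref{L1} the associated isomorphism $\phi:\overline{\ma}\overset{\sim}\lrar\overline{\ma}^\vee$ again satisfies Teter's condition, i.e. $\phi(\overline{x})(\overline{y}) = \phi(\overline{y})(\overline{x})$ for all $x,y\in\ma$. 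Restricting $\phi$ to the $R$-submodule $\overline{\md}\inc\overline{\ma}$ gives an injective map $\overline{\md}\into\overline{\ma}^\vee$ still satisfying $\phi(\overline{x})(\overline{y}) = \phi(\overline{y})(\overline{x})$ for all $x,y\in\md$. This is exactly hypothesis (a) of Theorem \ref{T1}; since (b) and (c) of Theorem \ref{T1} hold as well, Theorem \ref{T1} yields $g(R)\leq\lambda(R/\overline{\ma})$.

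The step I expect to be the crux — and the one I would watch most carefully — is the verification that $h(\omega)$ is the whole ideal $\overline{\ma}$ and not just a subideal (a priori Proposition \ref{R4}.3 gives only $\overline{\ma}^2\inc h(\omega)\inc\overline{\ma}$). This is precisely why condition (c) must be strengthened from $(\mb:_T\ma)\inc\md$ in Theorem \ref{T1} to $(\mb:_T\ma)\inc\md\cap\ma^2$ here: that extra $\cap\,\ma^2$ is exactly what makes Proposition \ref{R4}.4 applicable. Everything else is routine bookkeeping with the correspondence between ideals of $T$ containing $\mb$ and ideals of $R$.
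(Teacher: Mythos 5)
Your proof is correct and follows essentially the same route as the paper's: obtain $f$ from Lemma \ref{L1}, translate hypothesis (c) into $(0:_R\overline{\ma})\inc\overline{\ma}^2$, symmetrize via Proposition \ref{R4} to get $h$ with $h(\omega)=\overline{\ma}$ satisfying Teter's condition, pass back through Lemma \ref{L1}, restrict to $\overline{\md}$, and invoke Theorem \ref{T1}. You have correctly identified that the strengthening $(\mb:_T\ma)\inc\ma^2$ is precisely what makes Proposition \ref{R4}.4 applicable, which is the crux of the argument.
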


\begin{proof}
Since $\overline \ma \simeq \overline \ma^\vee$, by Lemma \ref{L1}, $\overline \ma = f(\omega)$ for some $f \in \omega^*$ satisying the condition $\ker(f)\cdot f(\omega) = 0$. By (3), $(\mb :_T \ma) \inc \ma^2$, i.e., $(0 :_R \overline \ma) \inc \overline \ma^2$. Hence by Proposition \ref{R4}, there is a map $h \in \omega^*$ satisfying Teter's condition such that $h(\omega) = \overline \ma$. By Lemma \ref{L1}, since $h$ satisfies Teter's condition, so does the induced isomorphism $\phi :\overline \ma \overset{\sim}\lrar \overline \ma^\vee$.

Thus $\phi$ restricted to $\overline \md$ satisfies condition (1) of Theorem \ref{T1}. Hence the conclusion of the corollary follows from Theorem \ref{T1}. 
\end{proof}

\noindent
By taking $\ma = \md$ in the above corollary, we get the following

\begin{corollary}\label{C3}
With notation as in Setup {\rm \ref{S1}}, let $\ma$ be an ideal in $T$ generated by a system of parameters. Furthermore, assume that $\overline \ma \simeq \overline \ma^\vee$, $2$ is invertible in $R$ and $\mb \inc \ma^3$. Then $g(R) \leq  \lambda(R/\overline{\ma})$.
\end{corollary}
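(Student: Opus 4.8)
The plan is to obtain Corollary \ref{C3} as the special case $\md = \ma$ of Corollary \ref{C1}; all the work lies in checking that the hypotheses of Corollary \ref{C1} are met under this choice. Since $\ma$ is generated by a system of parameters in the regular (hence Cohen-Macaulay) local ring $T$, I may write $\ma = (x_1, \ldots, x_n)$ with $x_1, \ldots, x_n$ a regular sequence, and this regularity is the only consequence of that hypothesis I will use. Condition (a) of Corollary \ref{C1} and the invertibility of $2$ in $R$ are hypotheses of Corollary \ref{C3} verbatim. With $\md = \ma$, condition (b) of Corollary \ref{C1} becomes $\mb \subseteq \ma\md = \ma^2$, which is immediate from $\mb \subseteq \ma^3$. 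Since $\md \cap \ma^2 = \ma \cap \ma^2 = \ma^2$, condition (c) of Corollary \ref{C1} becomes $(\mb :_T \ma) \subseteq \ma^2$.

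So the one genuine point is to show $(\mb :_T \ma) \subseteq \ma^2$. Because $\mb \subseteq \ma^3$, it suffices to prove $(\ma^3 :_T \ma) \subseteq \ma^2$, i.e. the standard identity $(\ma^{m+1} :_T \ma) = \ma^m$ valid for an ideal generated by a regular sequence. I would argue via the associated graded ring: for a regular sequence $x_1, \ldots, x_n$ in $T$ one has $\mathrm{gr}_\ma(T) \cong (T/\ma)[X_1, \ldots, X_n]$, in which each $X_i$ is a non-zero-divisor. Given $u \in T$ with $u\ma \subseteq \ma^3$, let $j$ be largest with $u \in \ma^j$ (finite by Krull's intersection theorem, as $\ma$ is $\m_T$-primary) and let $u^\ast \in \ma^j/\ma^{j+1}$ be its nonzero leading form. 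If $j \leq 1$, then for each $i$ the product $u^\ast X_i$ is nonzero in $\mathrm{gr}_\ma(T)$ (since $X_i$ is a non-zero-divisor and $u^\ast \neq 0$), so it is the leading form of $u x_i \in \ma^3$, forcing $j + 1 \geq 3$ — a contradiction. Hence $j \geq 2$, that is $u \in \ma^2$. (Alternatively one could simply cite this colon-ideal identity as a known property of ideals generated by regular sequences and skip the associated-graded computation.)

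Having verified conditions (a), (b), (c) of Corollary \ref{C1} together with the invertibility of $2$ in $R$, Corollary \ref{C1} applies and yields $g(R) \leq \lambda(R/\overline{\ma})$, completing the proof. The only nonroutine ingredient is the colon computation $(\ma^3 :_T \ma) = \ma^2$; everything else is unwinding definitions and propagating the inclusion $\mb \subseteq \ma^3$ through the two preceding corollaries, and I do not anticipate any further obstacle.
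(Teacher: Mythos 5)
Your proposal is correct and follows the paper's own route exactly: Corollary \ref{C3} is obtained by specializing Corollary \ref{C1} to $\md = \ma$, and the only point needing any work is that $(\mb :_T \ma) \subseteq \ma^2$, which you reduce to the colon identity $(\ma^3 :_T \ma) = \ma^2$ for an ideal generated by a regular sequence. The paper states the corollary with no further comment, so it treats this colon inclusion as immediate; your associated-graded argument (or the cited standard fact) correctly supplies the verification that the paper leaves to the reader.
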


The following is really a corollary, but is important enough to be accorded the status of a theorem.

\begin{theorem}\label{T2}
Let $(R,\m,\sk)$ be an Artinian local ring. Write $R \simeq T/\mb$ where $(T,\m_T,\sk)$ is a regular local ring  and $\mb$ is an $\m_T$-primary ideal. Let $\ \bar{}$ denote going modulo $\mb$. Suppose that $\mb \inc \m_T^6$ and $2$ is invertible in $R$. Then the following are equivalent:\\
{\rm i)} $g(R) \leq 2$.\\
{\rm ii)} There exists a self-dual ideal $\overline \ma \inc R$ such that $\lambda(R/\overline \ma) \leq 2$.
\end{theorem}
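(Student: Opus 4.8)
The plan is to establish the two implications separately; only $\mathrm{(ii)}\Rightarrow\mathrm{(i)}$ will use the standing hypotheses $\mb\inc\m_T^6$ and $1/2\in R$.

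For $\mathrm{(i)}\Rightarrow\mathrm{(ii)}$ I would pick a Gorenstein Artin local ring $S$ together with a surjection $\psi:S\onto R$ attaining $\lambda(S)-\lambda(R)=g(R)\leq 2$. By Proposition \ref{KO} the canonical module $\omega$ sits inside $S$ as an ideal; set $f=\psi|_\omega$ and $\overline{\ma}:=\psi(\omega)=f(\omega)$. Corollary \ref{C0.25} gives that $\overline{\ma}$ is self-dual, and Lemma \ref{L2} gives $\lambda(R/\overline{\ma})\leq\lambda(S)-\lambda(R)=g(R)\leq 2$, which is precisely $\mathrm{(ii)}$. No extra hypotheses are needed for this direction.

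For $\mathrm{(ii)}\Rightarrow\mathrm{(i)}$, start from a self-dual ideal $\overline{\ma}\inc R$ with $\lambda(R/\overline{\ma})\leq 2$ and lift it to an ideal $\ma\inc T$ containing $\mb$, so that $\lambda(T/\ma)=\lambda(R/\overline{\ma})\leq 2$. The goal is to bring $\ma$ into the shape required by Corollary \ref{C3} --- an ideal generated by a system of parameters of $T$ with $\mb\inc\ma^3$ --- after which $g(R)\leq\lambda(R/\overline{\ma})\leq 2$ follows immediately from that corollary (this is where $1/2\in R$ and $\overline{\ma}\simeq\overline{\ma}^\vee$ get used). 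If $\ma=T$ then $R=\overline{\ma}\simeq\overline{\ma}^\vee=\Hom_R(R,\omega)\simeq\omega$, so $R$ is Gorenstein and $g(R)=0$. Otherwise $\ma\inc\m_T$, and since the local ring $T/\ma$ has length at most $2$ its maximal ideal squares to zero; equivalently $\m_T^2\inc\ma$. This single containment does all the work. On the one hand, $\ma^3\supseteq(\m_T^2)^3=\m_T^6\supseteq\mb$. On the other hand, $\dim_{\sk}(\m_T/\ma)\leq 1$, so I can choose a regular system of parameters $\vec{x}{1}{d}$ of $T$ with $x_1,\dots,x_{d-1}\in\ma$ and $x_d\notin\ma$; a dimension count in $\m_T/\m_T^2$ then shows $\ma=(x_1,\dots,x_{d-1})+\m_T^2=(x_1,\dots,x_{d-1},x_d^2)$, which is generated by a system of parameters of $T$ (when $\ma=\m_T$ one simply takes the $x_i$ themselves). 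Corollary \ref{C3} now applies and yields $g(R)\leq\lambda(R/\overline{\ma})\leq 2$.

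The only step carrying real content is the last one: verifying that the lifted ideal $\ma$ admits the ``system of parameters plus one square'' generating set and that $\mb\inc\ma^3$. Both are immediate consequences of $\m_T^2\inc\ma$, which is also precisely the reason the exponent in the hypothesis is $6$ rather than something smaller. The remaining care is purely bookkeeping around the degenerate cases ($\ma=T$, or $\dim T=0$, or $\lambda(R)\leq 2$), in each of which $R$ is already Gorenstein; everything substantive --- self-dual ideals, Teter's condition, the idealization-type construction of $S$ --- has been packaged into Lemma \ref{L2}, Corollary \ref{C0.25} and Corollary \ref{C3}.
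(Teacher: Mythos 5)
Your proof is correct and follows the same route as the paper: reduce $\mathrm{(ii)}\Rightarrow\mathrm{(i)}$ to Corollary \ref{C3}, and derive $\mathrm{(i)}\Rightarrow\mathrm{(ii)}$ from the material packaged in Corollary \ref{C0} (which you unpack into Proposition \ref{KO}, Corollary \ref{C0.25} and Lemma \ref{L2}). The one small difference is in the casework on $\lambda(R/\overline\ma)$: the paper sends $\lambda(R/\overline\ma)\leq 1$ to the Huneke--Vraciu theorem (noting $\mb\inc\m_T^6$ forces $\soc(R)\inc\m^2$) and reserves Corollary \ref{C3} for $\lambda(R/\overline\ma)=2$, whereas you handle $\lambda=1$ and $\lambda=2$ uniformly through Corollary \ref{C3}; when $\lambda=1$ the lifted ideal is just $\m_T$ itself and the needed $\mb\inc\m_T^3$ is trivial, so this works and is marginally more self-contained. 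You also make explicit the assertion the paper leaves unproved, that when $\lambda(T/\ma)=2$ the ideal $\ma$ is generated by a system of parameters, via the observation $\m_T^2\inc\ma$ and the construction $\ma=(x_1,\dots,x_{d-1},x_d^2)$; the same containment $\m_T^2\inc\ma$ is what gives $\mb\inc\m_T^6\inc\ma^3$ in both treatments.
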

\begin{proof}
(i) $\Rightarrow$ (ii) follows from Corollary \ref{C0}.\\
(ii) $\Rightarrow$ (i): If $\lambda(T/\ma) \leq 1$, then by the Huneke-Vraciu Theorem, $g(R) \leq 1$, since $\mb \inc \m_T^6$ implies that $\soc(R) \inc \m^2$. If $\lambda(T/\ma) = 2$, then $\ma$ is generated by a system of parameters and $\mb \inc \m_T^6$ forces $\mb \inc \ma^3$. Hence, by Corollary \ref{C3}, $g(R) \leq 2$.
\end{proof}

\begin{remark}{\rm
Let the hypothesis be as in Theorem \ref{T2}. By combining the conclusions of the Huneke-Vraciu theorem and Theorem \ref{T2}, we see that $\min\{\lambda(R/\ma): \ma \simeq \ma^\vee\} = g(R)$ when either of the two quantities is at most two. Further, it follows from  Theorem \ref{T2} and Corollary \ref{C0} that if $g(R) = 3$, then so is $\min\{\lambda(R/\ma): \ma \simeq \ma^\vee\}$. Thus we see that in this case, Question \ref{Q1} has a positive answer if either $g(R) \leq 3$ or $\min\{\lambda(R/\ma): \ma \simeq \ma^\vee\} \leq 2$.\\
}\end{remark}

\noindent
{\bf Acknowledgement}

\noindent
I would like to thank the referee for valuable suggestions in shortening the proofs and regarding the presentation of this paper. I would also like to acknowledge Srikanth Iyengar for some very interesting discussions and my advisor Craig Huneke for his guidance, support and encouragement.   

\bibliographystyle{amsplain}

\end{document}